\documentclass[reqno,12pt]{amsart}

\usepackage[utf8]{inputenc}
\usepackage{times}
\usepackage[UKenglish]{babel}
\usepackage[T1]{fontenc}
\usepackage{stmaryrd,verbatim,calligra,mathrsfs}
\usepackage[normalem]{ulem}
\usepackage{pdflscape,xcolor}

\usepackage[a4paper,top=2cm,bottom=2cm,left=2cm,right=2cm,marginparwidth=1.75cm]{geometry}

\usepackage{amsmath,amssymb,amsfonts,amsthm,upgreek}
\usepackage{mathrsfs, mathtools}
\usepackage{graphicx}
\usepackage{booktabs}
\usepackage{siunitx}
\usepackage{tikz-cd}
\usepackage[colorinlistoftodos,prependcaption, textsize=small, textwidth=2.5cm]{todonotes}
\usepackage[all]{xy}
\usepackage{cancel}
\usepackage{appendix}
\usepackage{enumerate}
\usepackage{multicol}
\usetikzlibrary{calc}

\usepackage[colorlinks=true, allcolors=blue,backref=page]{hyperref}

\newcommand{\cH}{\mathcal{H}}

\newcommand{\cJ}{\mathcal{J}}

\newcommand{\cM}{\mathcal{M}}

\newcommand{\cV}{\mathcal{V}}
\newcommand{\cW}{\mathcal{W}}

\newcommand{\fg}{{\mathfrak g}}

\newcommand{\fh}{{\mathfrak h}}

\newcommand{\fk}{{\mathfrak k}}

\newcommand{\fm}{{\mathfrak m}}

\newcommand{\fp}{{\mathfrak p}}
\newcommand{\fq}{{\mathfrak q}}

\newcommand{\ft}{{\mathfrak t}}
\newcommand{\fu}{{\mathfrak u}}

\newcommand{\fX}{{\mathfrak X}}

\newcommand{\Z}{\mathbb{Z}}

\newcommand{\R}{\mathbb{R}}
\newcommand{\C}{\mathbb{C}}

\newcommand{\bbS}{\mathbb{S}}

\newcommand{\su}{\mathfrak{su}}

\newcommand{\gl}{\mathfrak{gl}}
\newcommand{\fsl}{\mathfrak{sl}}

\renewcommand{\O}{{\rm O}}

\newcommand{\SU}{{\rm SU}}
\newcommand{\GL}{\mathrm{GL}}

\newcommand{\U}{{\rm U}}

\newcommand{\Cas}{\mathrm{Cas}}

\newcommand{\roughtil}{(\nabla^\phi)^\ast\nabla^\phi}

\newcommand{\bangle}[1]{\left\langle #1 \right\rangle}

\newcommand{\Ad}{\mathrm{Ad}}
\newcommand{\Aut}{\mathrm{Aut}}

\renewcommand{\det}{\mathop\mathrm{det}\nolimits}

\renewcommand{\epsilon}{\varepsilon}

\newcommand{\Hom}{{\mathrm{Hom}}}

\newcommand{\ad}{\mathrm{ad}}

\newcommand{\diag}{\mathrm{diag}}

\newcommand{\id}{\mathrm{Id}}

\newcommand{\Ind}{\mathrm{Ind}}

\newcommand{\tr}{\mathop{\mathrm{tr}}\nolimits}

\renewcommand{\geq}{\geqslant}
\renewcommand{\leq}{\leqslant}
\newcommand{\ddto}{\left.\frac{d}{dt}\right\rvert_{t=0}}

\newcommand{\qandq}{\quad\text{and}\quad}

\newcommand{\qforq}{\quad\text{for}\quad}

\def\<{\mathopen{}\left<}
\def\>{\right>\mathclose{}}
\def\({\mathopen{}\left(}
\def\){\right)\mathclose{}}

\usepackage{multicol, color}

\definecolor{gold}{rgb}{0.85,.66,0}
\definecolor{cherry}{rgb}{0.9,.1,.2}
\definecolor{burgundy}{rgb}{0.8,.2,.2}
\definecolor{orangered}{rgb}{0.85,.3,0}
\definecolor{orange}{rgb}{0.85,.4,0}
\definecolor{olive}{rgb}{.45,.4,0}
\definecolor{lime}{rgb}{.6,.9,0}
\definecolor{green}{rgb}{.2,.7,0}
\definecolor{grey}{rgb}{.4,.4,.2}
\definecolor{brown}{rgb}{.4,.3,.1}

\newtheorem{theorem}{Theorem}[section]
\newtheorem*{theorem*}{Theorem}
\newtheorem{proposition}{Proposition}[section]
\newtheorem{corollary}[proposition]{Corollary}
\newtheorem{lemma}[proposition]{Lemma}

\theoremstyle{remark}
\newtheorem{remark}[equation]{Remark}

\theoremstyle{definition}
\newtheorem{definition}[proposition]{Definition}

\numberwithin{equation}{section}

\title{Harmonic maps from $\mathbb{S}^{2n+1}$ to $\C P^n$ with low Morse index}
\author{L. Lara}
\date{\today}
\subjclass[2020]{Primary 58E20; Secondary 53C43, 22E46}
\keywords{harmonic maps, Morse index, Jacobi operator, Hopf fibrations,
Berger metrics, homogeneous Riemannian fibrations}

\begin{document}

\begin{abstract}
We study the Jacobi spectrum and low-index rigidity of complex Hopf
fibrations.  We first give a representation-theoretic description of the
Jacobi operator of a homogeneous Riemannian fibration and apply it to
\[
    \pi:(\bbS^{2n+1},g_\epsilon)\longrightarrow(\C P^n,h),
\]
where $g_\epsilon$ is a Berger metric and $h$ is the Fubini--Study metric.
For $n\geq3$, we determine all negative and zero eigenvalues and obtain
explicit formulas for the Morse index and nullity for every
$\epsilon>0$.  In particular, for the unit round metric,
$\mathrm{Ind}(\pi)=2n+2$ and
$\mathrm{Null}(\pi)=n(3n+5)$.  We then extend Rivière's rigidity argument
from $\bbS^3\to\bbS^2$ to higher dimensions.  If
$\phi:\bbS^{2n+1}\to\C P^n$ is harmonic and
$\mathrm{Ind}(\phi)=2n+2$, then
$d\phi_x\circ d\phi_x^\ast$ commutes with the complex structure of
$\C P^n$ at every point.  Consequently, up to an isometry of the domain,
$\phi$ factors through the Hopf fibration as
$\phi=P\circ\pi\circ f$, where $P:\C P^n\to\C P^n$ is holomorphic.
\end{abstract}
\maketitle

\tableofcontents

\section{Introduction}\label{sec:introduction}
\subsection{Context}
Harmonic maps between Riemannian manifolds are the critical points of the
Dirichlet energy and form a nonlinear extension of several classical
geometric theories, including geodesics and minimal surfaces.  Since the
foundational work of Eells and Sampson~\cite{EellsSampson1964}, the
existence, regularity and variational properties of harmonic maps have
been studied extensively.  At a harmonic map $\phi$, the second variation
is governed by the Jacobi operator $\cJ_\phi$.  Its negative eigenspace
measures the unstable directions of the energy, and its dimension is the
Morse index of $\phi$.  For maps whose domain is a round sphere,
conformal transformations of the domain provide natural test sections
for $\cJ_\phi$ and lead to general lower bounds and rigidity questions;
see, for instance, El Soufi~\cite{ElSoufi93,ElSoufi1995}.

The complex Hopf fibrations
\[
    \pi:\bbS^{2n+1}\longrightarrow\C P^n
\]
are among the most symmetric examples of harmonic maps.  With the round
metric on the sphere and the Fubini--Study metric on the target, they are
Riemannian submersions with totally geodesic circle fibres, hence harmonic
morphisms; see~\cite{BairdWood2003}.  Their symmetry makes their Jacobi
operators accessible to harmonic analysis and representation theory.
Urakawa developed a general approach to the stability of homogeneous
harmonic maps and computed the index and nullity of the classical Hopf
map $\bbS^3\to\bbS^2$~\cite{Urakawa1987-so}.  The latter computation was
also recovered by Loubeau and Oniciuc through the spectrum of the
vertical Laplacian~\cite{LoubeauOniciuc2007}: the index is $4$ and the
nullity is $8$.

The index-$4$ value in dimension three is rigid.  Rivière proved that any
non-constant harmonic map $\phi:\bbS^3\to\bbS^2$ with
$\mathrm{Ind}(\phi)\leq4$ has index exactly $4$ and, up to an isometry of
the domain, is the composition of the Hopf fibration with a holomorphic
self-map of $\C P^1$~\cite{Riviere2023-co}.  This suggests two related
questions in higher dimensions: to determine the complete low spectrum
of the complex Hopf fibration, and to understand whether the value
$2n+2$ characterises maps which factor through it.

During the preparation of this paper, we learned of three independent
works relevant to the results obtained here.  First, Gutwein and
Langlais computed the index and nullity of the round complex Hopf
fibrations and proved the integrability of their Jacobi fields in their
study of harmonic maps with conical singularities
~\cite{GutweinLanglais2026}.  Their round-metric values agree with ours,
while our spectral result treats the full Berger family and describes
the changes of index and nullity at the two degenerate parameters.
Second, Surkov proved that, for $N>2$, a harmonic map
$\phi:\bbS^N\to\C P^m$ with $\mathrm{Ind}(\phi)=N+1$ is
pseudo-horizontally weakly conformal~\cite[Theorem~1]{Surkov2026}.  For
$N=2n+1$ and $m=n$, this is precisely the pointwise commutation relation
\[
    d\phi_x\circ d\phi_x^\ast\circ J_{\phi(x)}
    =J_{\phi(x)}\circ d\phi_x\circ d\phi_x^\ast
\]
appearing in~\eqref{eq:PHWC-commutation}; our proof of this relation was
obtained independently.  In the present setting, we use it to deduce the
factorisation $\phi=P\circ\pi\circ f$ with $P$ holomorphic.  Finally,
Chen, Wang and Zhang proved that a non-constant harmonic map
$u:\bbS^{2n+1}\to\C P^n$ has index $2n+2$ if and only if, up to an
isometry of the domain, it is the composition of the Hopf fibration with
a holomorphic self-map of $\C P^n$ of degree one
~\cite{ChenWangZhang2026}.  They also computed its nullity as
$3n^2+5n$, in agreement with the round-metric value obtained here.

\subsection{Main results}
We first express the Jacobi operator of a homogeneous fibration
$G/H\to G/K$ in terms of Casimir operators; see
Theorem~\ref{thm:Urakawa}.  This refines the approach of
Urakawa~\cite{Urakawa1987-so} and reduces the spectral problem to
branching rules and finite-dimensional representation theory.  Applying
this formula to
\[
    \SU(n+1)/\SU(n)\longrightarrow
    \SU(n+1)/\mathrm{S}(\U(1)\times\U(n))
\]
gives the complete negative and zero spectrum of
$\pi:(\bbS^{2n+1},g_\epsilon)\to(\C P^n,h)$ for $n\geq3$.  With the
normalisation in~\eqref{eq:Berger-round-normalisation}, the only
eigenvalues which can be negative are
\[
    \frac{2n}{n+1}\epsilon^2-2n
    \qquad\text{and}\qquad
    \frac{8n}{n+1}\epsilon^2-4,
\]
with real multiplicities $2(n+1)$ and $n(n+1)$, respectively.  Therefore
\begin{equation*}
    \mathrm{Ind}(\pi)=
    \begin{cases}
        (n+1)(n+2),&0<\epsilon^2<\dfrac{n+1}{2n},\\[1ex]
        2(n+1),&\dfrac{n+1}{2n}\leq\epsilon^2<n+1,\\[1ex]
        0,&\epsilon^2\geq n+1.
    \end{cases}
\end{equation*}
We also determine the nullity at every parameter; see
Theorem~\ref{thm:Hopf-negative-multiplicities}.  In particular, the unit
round metric corresponds to $\epsilon^2=(n+1)/(2n)$ and satisfies
\[
    \mathrm{Ind}(\pi)=2n+2,
    \qquad
    \mathrm{Null}(\pi)=n(3n+5).
\]

Our second main result is a higher-dimensional counterpart of Rivière's
rigidity theorem.
\begin{theorem}\label{thm:intro-low-index-rigidity}
    Let $\phi:\bbS^{2n+1}\to\C P^n$ be a harmonic map with
    $\mathrm{Ind}(\phi)=2n+2$.  Then there exist an isometry
    $f\in\Aut(\bbS^{2n+1})$ and a holomorphic map
    $P:\C P^n\to\C P^n$ such that
    \[
        \phi=P\circ\pi\circ f.
    \]
\end{theorem}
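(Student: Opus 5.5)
We follow Rivière's strategy for $\bbS^3\to\bbS^2$, adapted to $\bbS^{2n+1}\to\C P^n$, in three stages.

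\textbf{Stage 1: a family of $2n+2$ test sections.} We build test variations from the conformal structure of the domain and the holomorphic structure of the target. Write $\bbS^{2n+1}\subset\C^{n+1}$ and, for each $v\in\C^{n+1}\cong\R^{2n+2}$, let $\nabla x_v$ be the conformal vector field, i.e. the tangential gradient of the linear function $x\mapsto\langle x,v\rangle$. Following El Soufi, we consider the sections $V_v=d\phi(\nabla x_v)$ of $\phi^{-1}T\C P^n$. A Bochner-type computation shows that $\sum_v Q_\phi(V_v,V_v)$, where $Q_\phi$ is the second variation, is a negative multiple of the energy. In fact one gets more: for any unit $v$,
\[
Q_\phi(V_v,V_v)=-\int (\text{positive})\,\langle d\phi(\nabla x_v),d\phi(\nabla x_v)\rangle .
\]
So the $(2n+2)$-dimensional space $\{V_v\}$ is negative definite, once we show that $v\mapsto V_v$ is injective for non-constant $\phi$ (a unique continuation argument). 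Hence these sections already exhaust the index.

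\textbf{Stage 2: twisted test sections.} We also form $W_v=J\,d\phi(\nabla x_v)$, using the complex structure $J$ of $\C P^n$, or more generally $d\phi(\nabla x_v)$ rotated by the Kähler structure. Because $\mathrm{Ind}(\phi)=2n+2$ and the $V_v$ already span a maximal negative subspace, the quadratic form on $\mathrm{span}\{V_v\}\oplus\mathrm{span}\{W_v\}$ cannot have a negative subspace of dimension greater than $2n+2$. We average $Q_\phi$ over the orthonormal basis $v$, combining $V_v$ and $W_v$ with $\mathrm{O}(2n+2)$-invariant weights, for instance $V_v+tW_v$ and $V_v+tW_{iv}$. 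The traces then reduce to pointwise integrands in $d\phi\,d\phi^\ast$ and in the curvature of $\C P^n$. Using the Fubini--Study curvature identity
\[
R(X,Y)Z=\tfrac14\bigl(\langle Y,Z\rangle X-\langle X,Z\rangle Y+\langle JY,Z\rangle JX-\langle JX,Z\rangle JY+2\langle X,JY\rangle JZ\bigr),
\]
the trace becomes $\int |[d\phi\,d\phi^\ast,J]|^2$ times a positive constant, plus terms that are nonpositive by Stage 1. The index bound then forces $[d\phi_x d\phi_x^\ast,J]=0$ pointwise, i.e. $\phi$ is pseudo-horizontally weakly conformal. This is the step I expect to be the main obstacle. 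The sign bookkeeping must make the commutator term appear with a definite sign, and one must check that in dimension $2n+1>3$ the domain-side terms (involving $\mathrm{Ric}_{\bbS}=2n$) do not spoil it. If the direct combination fails, I would try complexifying and using the $(1,0)$ and $(0,1)$ parts $\partial\phi$, $\bar\partial\phi$ separately, as Rivière does with $\phi^\ast\omega$.

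\textbf{Stage 3: factorisation.} From the commutation relation we conclude as follows. The image of $d\phi_x^\ast$ is $J$-compatible, so $\ker d\phi_x$ has odd real codimension at most $2n$ with a complex-structure-compatible horizontal space. Harmonicity of a PHWC map gives that $\phi^\ast\omega$ is a closed $2$-form of rank at most $2n$, and the leaves of its kernel distribution define a transversally holomorphic foliation. Next, by the vanishing of the Stage 2 inequality (equality case), the conformal vector fields generate Jacobi fields. This forces the second fundamental form pieces to vanish, and hence the kernel distribution to be the Hopf distribution of some complex structure on $\C^{n+1}$ compatible with the round metric, up to an isometry $f$. Then $\phi$ is constant on the Hopf circles of $f$, so $\phi=P\circ\pi\circ f$, and PHWC of $\phi$ together with the Riemannian submersion property of $\pi$ forces $dP$ to commute with $J$ (choosing the orientation via $f$), i.e. $P$ is holomorphic.
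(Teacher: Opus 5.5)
Your Stages 1 and 2 are essentially the paper's argument. The obstacle you expect in Stage 2 disappears once Stage 1 is stated sharply. For harmonic $\phi$ one has exactly $\cJ_\phi(v_a)=(1-2n)v_a$, with $v_a=d\phi(X_a)$. So $\mathrm{Ind}(\phi)=2n+2$ makes $\mathcal{V}=\{v_a\}$ the whole negative eigenspace, and $\lambda_1(\cJ_\phi)=1-2n$. Knowing only that the second variation is negative on $\mathcal{V}$ would not give this, and the mixed vectors $V_v+tW_v$ are not needed. The Rayleigh bound then applies to each $Jv_a$ separately. Because $J$ is parallel, with $\phi_j=d\phi(e_j)$ one has
$h(\cJ_\phi(Jv),Jv)-h(\cJ_\phi v,v)=\sum_j\bigl(h(v,J\phi_j)^2-h(v,\phi_j)^2\bigr)=:Z(v)$.
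This involves only the curvature of $\C P^n$, so no Ricci term of the sphere appears. Tracing $a\mapsto Z(v_a)(x)$ in a basis of $\R^{2n+2}$ adapted to $x$ gives $|\phi^\ast\omega|^2-|\phi^\ast h|^2=-\tfrac12\bigl|[d\phi\,d\phi^\ast,J]\bigr|^2\leq 0$, and the commutation relation follows as you predicted.

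The genuine gap is Stage 3. The commutation relation by itself does not produce a Hopf direction in $\ker d\phi$. The transversally holomorphic foliation and the vanishing second fundamental forms you invoke are not arguments.

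Your remark about the equality case points the right way, but what it yields is not Jacobi fields. The averaged form $a\mapsto\int Z(v_a)$ is positive semidefinite with zero trace, so it vanishes. Hence every $Jv_a$ attains $\lambda_1=1-2n$ in the Rayleigh quotient and lies in $\mathcal{V}$. The paper obtains the same $J$-invariance differently: it uses the commutation relation to pair an orthonormal frame of $(\ker d\phi)^\perp$ as $\{e_i,J_0e_i\}$ with $d\phi(J_0e_i)=Jd\phi(e_i)$. The curvature terms in $\cJ_\phi(Jv)-J\cJ_\phi(v)$ then cancel in pairs, giving $J\cJ_\phi=\cJ_\phi J$.

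Since $a\mapsto v_a$ is injective, $J$-invariance gives a linear complex structure $J_1$ on $\R^{2n+2}$ with $Jv_a=v_{J_1a}$. Conjugate $J_1$ to the standard $J$ by some $M\in\O(2n+2)$ (this requires $J_1$ to be orthogonal) and replace $\phi$ by $\phi\circ M$. Evaluating at $a=Jx$ gives $d\phi_x(\xi(x))=v_{Jx}(x)$, while $Jv_{Jx}(x)=v_{-x}(x)=0$ because $X_x(x)=0$. So $d\phi(\xi)=0$ and $\phi=P\circ\pi$.

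Your final step also fails as stated. For $\phi=P\circ\pi$, PHWC only says that $dP\,dP^\ast$ commutes with $J$. That holds whenever $dP=AU$ with $A$ complex linear and $U$ an arbitrary orthogonal map, so it yields neither holomorphicity nor anti-holomorphicity. Holomorphicity comes instead from $Jv_a=v_{Ja}$ itself: for horizontal $a\in T_x\bbS^{2n+1}$ it reads $Jd\phi_x(a)=d\phi_x(Ja)$.
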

The proof first shows that equality in a pointwise linear-algebraic
inequality forces $d\phi_x\circ d\phi_x^\ast$ to commute with the complex
structure of $\C P^n$.  This makes the lowest eigenspace of
$\cJ_\phi$ complex invariant.  After conjugating the induced complex
structure on $\R^{2n+2}$ to the standard one, we obtain
$d\phi(\xi)=0$ for the Hopf vector field $\xi$.  Thus $\phi$ factors
through $\pi$, and the same eigenspace identity shows that the resulting
self-map of $\C P^n$ is holomorphic.

\subsection{Organisation}
Section~2 recalls the Jacobi operator, homogeneous Riemannian fibrations
and the representation-theoretic decomposition used to diagonalise it.
Section~3 specialises the general formula to the complex Hopf fibration
with Berger metrics and computes its Morse index and nullity.
Section~4 revisits Rivière's theorem in the coordinates used here and
then proves Theorem~\ref{thm:intro-low-index-rigidity}.
\section{Preliminaries}
\subsection{Harmonic maps and Morse index}
Let $(M^m,g)$ and $(N^n,h)$ be closed Riemannian manifolds and $\phi:M\to N$ a smooth map. If $\{e_1,\cdots, e_m\}$ is a local orthonormal frame on $M$, the energy of $\phi$ is defined as
\begin{equation}
    \mathcal{E}(\phi) = \frac{1}{2}\int_M |\mathrm{d}\phi|^2\, \mathrm{d}v_g = \frac{1}{2}\int_{M}\sum_{i=1}^m h(d\phi(e_i),d\phi(e_i))\mathrm{d}v_g,
\end{equation}
where $\mathrm{d}v_g$ is the volume form on $M$. Denoting by $E=\phi^{-1}(TN)$ the pullback bundle, any smooth variation of $\phi$ is defined by a smooth section $V\in\Gamma(E)$ by the following formula
\begin{equation}
    \phi_t(p) = \exp_{\phi(p)}(tV(p)) \qforq p\in M.
\end{equation}
If $\nabla^g$ (resp. $\nabla^h$) denotes the Levi-Civita connection on $M$ (resp. $N$), we have the induced connection $\nabla^\phi$ on $E$ defined by
\begin{equation}
    \nabla^\phi_X V = \nabla^h_{d\phi(X)}V \qforq X\in T_pM,\, V\in T_{\phi(p)}N.
\end{equation}
We say that $\phi$ is a harmonic map if it is a critical point of $\mathcal{E}$, that is, if for any smooth variation $\phi_t$ of $\phi$ we have
\begin{equation}
    \left.\frac{d}{dt}\right|_{t=0}\mathcal{E}(\phi_t) = -\int_M h(V,\tau(\phi)) = 0,
\end{equation}
where $\tau(\phi) = \sum_{i=1}^m \nabla^\phi_{e_i}d\phi(e_i) - d\phi(\nabla^g_{e_i}e_i)$ is the tension field of $\phi$. The second variation of the energy functional is given by
\begin{equation}
    \left.\frac{d^2}{dt^2}\right|_{t=0}\mathcal{E}(\phi_t) = \int_M h(V,\cJ_\phi V)\mathrm{d}v_g,
\end{equation}
where $\cJ_\phi$ is the Jacobi operator of $\phi$, defined by
\begin{equation}
    \cJ_\phi V = -\sum_{i=1}^m \nabla^\phi_{e_i}\nabla^\phi_{e_i} V - \nabla^\phi_{\nabla^g_{e_i}e_i}V - \sum_{i=1}^m R^h(V,d\phi(e_i))d\phi(e_i).
\end{equation}
Here $R^h$ is the curvature tensor of $(N,h)$, defined by $R^h(X,Y)Z = \nabla^h_X\nabla^h_Y Z - \nabla^h_Y\nabla^h_X Z - \nabla^h_{[X,Y]}Z$. The operator $\cJ_\phi$ is self-adjoint and elliptic, so its spectrum is discrete and consists of increasing sequence $\lambda_1(\cJ_\phi)<\lambda_2(\cJ_\phi)<\cdots<\lambda_n(\cJ_\phi)\nearrow\infty$ of real eigenvalues with finite multiplicity. The Morse index of $\phi$, denoted by $\mathrm{Ind}(\phi)$, is defined as the number of negative eigenvalues of $\cJ_\phi$, counted with multiplicity. The nullity of $\phi$ is defined by $\mathrm{Null}(\phi)\coloneqq\dim_\R\ker\cJ_\phi$. For later use, we fix the notation $\lambda_1(\cJ_\phi)$ for the least eigenvalue of $\cJ_\phi$, and recall that it can be computed by the Rayleigh quotient
\begin{equation}
    \lambda_1(\cJ_\phi) = \inf_{V\in\Gamma(E)}\left\{\frac{\int_M h(\cJ_\phi V,V)\mathrm{d}v_g}{\int_M h(V,V)\mathrm{d}v_g}\right\}.
\end{equation}
\subsection{Riemannian homogeneous fibrations}\label{ssec:RHF} Let $G$ be a compact connected Lie group, $H\subset K\subset G$ be closed subgroups and $\fh,\fk,\fg$ be their respective Lie algebras. We also put $s =\dim G$, $m = \dim(G/H)$ and $n = \dim(G/K)$. Let $\langle\cdot,\cdot\rangle$ be an $\Ad(G)$-invariant inner product on $\fg$, and let $\fp$ (resp. $\fq$) be the orthogonal complements of $\fh$ (resp. $\fk$) in $\fk$ (resp. $\fg$). We also define $\fm = \fp\oplus\fq$, so that we have the following orthogonal decompositions
\begin{equation}
    \fg = \fk\oplus \fq = \fh \oplus \fp\oplus \fq = \fh\oplus \fm,
\end{equation}
where there holds $\Ad(K)\fq = \fq$, $\Ad(H)\fp = \fp$ and $\Ad(H)\fm = \fm$.

Given $\epsilon,\delta>0$ real parameters, we consider the $\Ad(H)$-invariant inner product on $\fm$ given by
\begin{equation}
    \bangle{X,Y}_{\epsilon,\delta} = \frac{1}{\epsilon^2}\bangle{X_\fp,Y_\fp}+\frac{1}{\delta^2}\bangle{X_\fq,Y_\fq}, \qforq X,Y\in\fm = \fp\oplus \fq,
\end{equation}
where the subscripts denote the $\bangle{\cdot,\cdot}$-orthogonal projection onto the corresponding subspace. Notice that the second term also gives an $\Ad(K)$-invariant inner product $\delta^{-2}\bangle{\cdot,\cdot}$ on $\fq$. We shall denote by $g_{\epsilon,\delta}$ (resp. $h_\delta$) the corresponding $G$-invariant metric on $G/H$ (resp. $G/K$). With these notations, the map $\phi:(G/H,g_{\epsilon,\delta})\to (G/K,h_\delta)$, defined by $\phi(xH) = xK$, is a Riemannian submersion with totally geodesic fibers, and hence, a harmonic map.

The pullback bundle $E$ can be identified with the associated bundle $G\times_H \fq$, which is the quotient of $G\times \fq$ by the action of $H$ given by $y\cdot(x,X) = (xy^{-1},\Ad(y) X)$ for $y\in H$ and $(x,X)\in G\times\fq$. Also, the space of smooth sections $\Gamma(E)$ can be identified with the space of $H$-equivariant $\fq$-valued functions on $G$, that is,
\begin{equation}\label{def:H-eq-functions}
    C^\infty_H(G,\fq) = \{f\in C^\infty(G,\fq)\;|\; f(xy) = \Ad(y^{-1})f(x),\;\forall x\in G,\,y\in H\}.
\end{equation}
For $x\in G$, let $\tau_x$ denote the left translation induced by $x$ on
the homogeneous spaces, so that $\tau_x(yH)=xyH$ and
$\tau_x(yK)=xyK$.  The identification above is the $G$-equivariant, linear
isomorphism $\Psi:C^\infty_H(G,\fq)\to\Gamma(E)$ given by
\begin{equation}\label{eq:Psi-section-identification}
    \Psi(f)(xH)=[x,f(x)]
    =(\tau_x)_\ast f(x)\in E_{xH}.
\end{equation}
Here $[x,f(x)]$ denotes the class in $G\times_H\fq$.  The equivariance
condition in~\eqref{def:H-eq-functions} ensures that this definition is
independent of the representative of $xH$.  Explicitly, the inverse map is
given by
\begin{equation}
    \Psi^{-1}(v)(x)=(\tau_{x^{-1}})_\ast v(xH)\in T_K(G/K)\cong\fq.
\end{equation}
For each $X\in\fg$ and $f\in C^\infty_H(G,\fq)$, we define
\begin{equation}
\begin{aligned}
    Xf\coloneqq \left.\frac{d}{dt}\right|_{t=0}f\circ R_{\exp(tX)},\quad\text{ and }\quad  \ad(X)f \coloneqq \ad(X)\circ f.
\end{aligned}
\end{equation}
Notice that the first expression acts on $f$ as a derivation of the function part, and the second acts only on the target space of $f$, so these operations actually commute. For each $X\in\fg$, consider also the operator $\Theta(X):C^\infty_H(G,\fq)\to C^\infty_H(G,\fq)$ defined by
\begin{equation}
    \Theta(X)f \coloneqq Xf+\ad(X)f,
    \qforq f\in C^\infty_H(G,\fq).
\end{equation}

Fix a basis $\{X_1,\cdots, X_s\}$ of $\fg$ such that $\{X_1,\cdots, X_n\}$ is a $\delta^{-2}\bangle{\cdot,\cdot}$-orthonormal basis of $\fq$, $\{X_{n+1},\cdots, X_{m}\}$ an $\epsilon^{-2}\bangle{\cdot,\cdot}$-orthonormal basis of $\fp$ and $\{X_{m+1},\cdots, X_{s}\}$ is a $\bangle{\cdot,\cdot}$-orthonormal basis of $\fh$. For latter use, we define an $\bangle{\cdot,\cdot}$-orthonormal basis $\{Y_i\}_{i=1}^s$ of $\fg$ putting $Y_i=\delta^{-1}X_i$ for $i\in\{1,\cdots, n\}$, $Y_i = \epsilon^{-1} X_i$ for $i\in\{n+1,\cdots, m\}$ and finally $Y_i=X_i$ for $i\in\{m+1,\cdots, s\}$.
\begin{remark}
    For a section $v\in\Gamma(E)$ and $x\in G$, we shall write
    $v_{xH}$ for its value $v(xH)$.  In particular,
    $v_H=v(eH)$ denotes the value of $v$ at the identity coset.  We use the
    same convention for vector fields $Y\in\fX(G/K)$, writing
    $Y_{xK}=Y(xK)$ and, in particular, $Y_K=Y(eK)$.
\end{remark}
\begin{theorem}\label{thm:Urakawa} Let $\phi:(G/H,g_{\epsilon,\delta})\to (G/K,h_\delta)$ be the Riemannian submersion defined above. Assume that $(G/H,g_{\epsilon,\delta})$ is a geodesic orbit space and that $(G/K,h_\delta)$ is a symmetric space. Then, under the identification~\eqref{eq:Psi-section-identification}, we have the following expression for the Jacobi operator $\cJ_\phi$:
\begin{equation}
    \cJ_\phi = \delta^2(D_K-D_G+D_G^\ad-D^\ad_K)-\epsilon^2D_\Theta,
\end{equation}
where the operators $D_\ast$ are defined as follows.
\begin{equation}\label{eq:jacobian-pieces}
\begin{aligned}
    D_G  &= \sum_{k=1}^{s} Y_k^2&&
    D^\ad_G  = \sum_{k=1}^{s} \ad(Y_k)^2\\
    D_K &= \sum_{\mathclap{k=n+1}}^sY_k^2&&
    D^\ad_K = \sum_{\mathclap{k=n+1}}^s\ad(Y_k)^2\\
    D_\Theta &= \sum_{k=n+1}^s\Theta(Y_k)^2
\end{aligned}
\end{equation}
\end{theorem}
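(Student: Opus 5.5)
We compute $\cJ_\phi$ directly from its definition, splitting it into the rough Laplacian $-\sum_i(\nabla^\phi_{e_i}\nabla^\phi_{e_i}-\nabla^\phi_{\nabla^g_{e_i}e_i})$ and the curvature term $-\sum_i R^h(V,d\phi(e_i))d\phi(e_i)$. We transport both through $\Psi$ from~\eqref{eq:Psi-section-identification}. Both sides are $G$-equivariant, since $\Psi$ intertwines left translations and the operators $Y_k$, $\ad(Y_k)$, $\Theta(Y_k)$ commute with the left regular action. Hence it suffices to identify the operators at the identity coset $eH$, using the local frame $e_i=(\tau_x)_\ast X_i$, $i\le m$, obtained by left-translating the $g_{\epsilon,\delta}$-orthonormal basis $\{X_1,\dots,X_m\}$ of $\fm$.

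\textbf{Step 1 (first-order covariant derivatives).} For $X\in\fq$, $d\phi$ maps the horizontal direction $X$ to the corresponding direction of $G/K$. Since $G/K$ is symmetric, the Levi-Civita connection of $h_\delta$ coincides with the canonical connection of the reductive decomposition $\fg=\fk\oplus\fq$. In the frame $(\tau_x)_\ast$ this connection differentiates only the function part, so $\Psi^{-1}\nabla^\phi_X\Psi f=Xf$.

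For $X\in\fp$, we have $d\phi(X)=0$. The section therefore moves inside the fixed fibre $T_{xK}(G/K)$, and $\nabla^\phi_X$ is the ordinary derivative there. Writing $(\tau_{x\exp tX})_\ast f(x\exp tX)=(\tau_x)_\ast\Ad(\exp tX)f(x\exp tX)$, which holds because the isotropy $K$ acts on $\fq\cong T_K(G/K)$ by $\Ad$, and differentiating at $t=0$, we get $\Psi^{-1}\nabla^\phi_X\Psi f=Xf+\ad(X)f=\Theta(X)f$.

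Finally, for $Y\in\fh$, differentiating the equivariance condition~\eqref{def:H-eq-functions} gives $Yf=-\ad(Y)f$, so $\Theta(Y)f=0$.

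\textbf{Step 2 (rough Laplacian).} Next we show that the terms $\nabla^\phi_{\nabla^g_{e_i}e_i}$ drop out. I would use the geodesic orbit hypothesis: it implies that $\sum_i\nabla^g_{X_i}X_i$ vanishes at $eH$, equivalently that $\sum_i[X_i,X_i]_\fm$-type correction terms and the $\fm$-projection of $\sum_i U(X_i,X_i)$ vanish. Then
\[
\Psi^{-1}\Big(\sum_i\nabla^\phi_{e_i}\nabla^\phi_{e_i}\Big)\Psi=\sum_{i\le n}X_i^2+\sum_{i=n+1}^m\Theta(X_i)^2 .
\]
Substituting $X_i=\delta Y_i$ on $\fq$ and $X_i=\epsilon Y_i$ on $\fp$, the first sum becomes $\delta^2(D_G-D_K)$. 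By Step 1, the terms $\Theta(Y_k)$ with $k>m$ act by zero, so the second sum can be extended to all $k\ge n+1$, giving $\epsilon^2D_\Theta$.

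\textbf{Step 3 (curvature).} On the symmetric space, with the paper's sign convention, $R^h(X,Y)Z=-[[X,Y],Z]$ at $eK$ for $X,Y,Z\in\fq$. Hence $-R^h(V,X)X=[X,[X,V]]=\ad(X)^2V$, and $\ad(X)^2$ preserves $\fq$ because $[\fq,\fq]\subset\fk$ and $[\fk,\fq]\subset\fq$. Summing over the horizontal frame gives $\delta^2\sum_{k\le n}\ad(Y_k)^2=\delta^2(D_G^\ad-D_K^\ad)$. Adding Steps 2 and 3 yields
\[
\cJ_\phi=\delta^2(D_K-D_G+D_G^\ad-D_K^\ad)-\epsilon^2D_\Theta .
\]

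The main obstacle is Step 1 together with the drop-out in Step 2. One must check carefully that the pullback connection along mixed horizontal/vertical directions really is given by these formulas, including the correct handling of $O'Neill$-type terms. The term $\nabla^g_{e_i}e_i$ must be shown to vanish globally for the chosen left-invariant frame, not just at one point, or alternatively be absorbed by equivariance. I would verify this by computing $\nabla^g_{X}X$ via the Koszul formula for the $\Ad(H)$-invariant metric. Here I would use the geodesic lemma $\langle [X,Z]_\fm,X\rangle_{\epsilon,\delta}=0$ for suitable representatives, which is the content of the geodesic orbit assumption.
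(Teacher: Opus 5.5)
Your proposal is correct and is essentially the paper's argument. The paper also computes at $eH$ with the left-translated frame, made well defined by the local section $N\cdot N_K$, which contains the one-parameter subgroups $\exp(tX_i)$ along which you compose your first-order formulas. It obtains $X^2$ along $\fq$ and $\Theta(X)^2$ along $\fp$ for the second covariant derivatives by citing Urakawa's Lemmas~8.6 and~8.15, where you derive them by hand. It then discards $\nabla^g_{X_i^\ast}X_i^\ast$ at $eH$ via the geodesic-orbit assumption, uses $R^h=-[[\cdot,\cdot],\cdot]$, and removes the $\fh$-terms through $Y^2=\ad(Y)^2=-Y\circ\ad(Y)$, which is your observation $\Theta(Y)f=0$ in another form.

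For the drop-out you only need $\nabla^g_{e_i}e_i=0$ at $eH$; equivariance does the rest, so global vanishing is not required. This amounts to $\langle[Z,X_i]_\fm,X_i\rangle_{\epsilon,\delta}=0$ for all $Z\in\fm$ and $X_i\in\fq\cup\fp$, which follows directly from $\Ad$-invariance, $\fp\perp\fq$, $[\fq,\fq]\subset\fk$ and $[\fk,\fq]\subset\fq$.
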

\begin{proof}
    In this proof, we follow the preparations in~\cite[8.2]{Urakawa1987-so}, but with some different notations. Choose a sufficiently small neighborhood $U$ of the identity in $G$ and set
    \begin{equation}
        N\coloneqq U\cap\exp(\fq),
        \qquad
        N_K\coloneqq U\cap\exp(\fp).
    \end{equation}
    We take $U$ small enough that multiplication defines a diffeomorphism
    $N\times N_K\to N\cdot N_K$, and that the restrictions of the quotient
    maps $\pi_K:G\to G/K$ and $\pi_H:G\to G/H$ define diffeomorphisms from
    $N$ and $N\cdot N_K$ onto neighborhoods of $K$ and $H$, respectively.
    Here $N\cdot N_K=\{yk:y\in N,\ k\in N_K\}$.  On the neighborhood $\pi_H(N\cdot N_K)$ of $H\in G/H$ we define for $X\in\fm$
    \begin{equation}\label{eq:good-frame}
        X^\ast_{xH} \coloneqq (\tau_{x})_\ast X\in T_{xH}G/H,
        \qforq x\in N\cdot N_K.
    \end{equation}
    For $X\in \fq$ we also define
    \begin{equation}
        \Tilde{X}_{yK} \coloneqq (\tau_{y})_\ast X\in T_{yK}G/K,
        \qforq y\in N.
    \end{equation}
    Now, for $f\in C^\infty_H(G,\fq)$ we write $f(x) = f^i(x)Y_i,\;x\in G$. Then, if $v = \Psi(f)$ and $x=yk$ with $y\in N$ and $k\in N_K$, we have
    \begin{equation}
        v(xH) = \Psi(f)(xH) = f^i(x)(\tau_x)_\ast Y_i = f^i(yk)(\tau_y)_\ast \Ad(k)Y_i,
    \end{equation}
    since the isotropy action on $\fq$ is equivalent to the adjoint action, and writing $\Ad(k)Y_i = a_i^j(k)Y_j$ gives us
    \begin{equation}\label{eq:perfect-for-pullback}
        v(xH) = f^i(yk)a_i^j(k)(\tau_y)_\ast Y_j = \Tilde{f}^j(ykH)\phi^\ast(\Tilde{Y}_j),
    \end{equation}
    where $\Tilde{f}^j(ykH) \coloneqq f^i(yk)a_i^j(k)$. It follows that equation~\eqref{eq:perfect-for-pullback} is the suitable expression of $v\in\Gamma(E)$ to compute the Jacobian operator. From~\cite[Lemma~8.15]{Urakawa1987-so} we get, together with the assumption of $G/K$ being symmetric irreducible, that
    \begin{equation}
        (\nabla^\phi _{X^\ast}\nabla^\phi_{X^\ast}v)_H = X^\ast_{H}(X^\ast \Tilde{f}^j)(\Tilde{Y}_j)_{K}, \qforq X\in\fm.
    \end{equation}
    Also, applying~\cite[Lemma~8.6]{Urakawa1987-so} we get
    \begin{small}
    \begin{equation}
    \begin{aligned}
    X^\ast_{H}(X^\ast \Tilde{f}^j) &= (X^2f^j)(e) + 2(Xf^i)(e)([X,Y_i],Y_j)+f^i(e)([X,[X,X_i]],X_j), \qforq X\in\fp\\
    X^\ast_{H}(X^\ast \Tilde{f}^j) &= (X^2f^j)(e), \qforq X\in\fq.
    \end{aligned}
    \end{equation}
    \end{small}
    Multiplying by $Y_j$ on both sides and summing over $j$, we get
    \begin{equation}
    \begin{aligned}
        \Psi^{-1}\bigl(\nabla^\phi_{X^\ast}\nabla^\phi_{X^\ast}v\bigr)(e)
        &= (X^2f)(e)+2\bigl((X\circ\ad(X))f\bigr)(e)
        +(\ad(X)^2f)(e),
        \qforq X\in\fp,\\
        \Psi^{-1}\bigl(\nabla^\phi_{X^\ast}\nabla^\phi_{X^\ast}v\bigr)(e)
        &= (X^2f)(e),
        \qforq X\in\fq.
    \end{aligned}
    \end{equation}
    To finally obtain the Laplacian $\roughtil v$ we need also to compute $(\nabla^\phi_{\nabla^g_{X^\ast}X^\ast}v)_H$. For this, we recall that
    \begin{equation}
        (\nabla^g_{X^\ast} Y^\ast)_H = \frac{1}{2}[X,Y]_\fm + U(X,Y), \qforq X,Y\in M,
    \end{equation}
    where $U$ is defined implicitly by
    \begin{equation}
        2\bangle{U(X,Y),Z}_{\epsilon,\delta} = \bangle{[Z,X]_\fm,Y}_{\epsilon,\delta} + \bangle{X,[Z,Y]_\fm}_{\epsilon,\delta}.
    \end{equation}
    Now, since we assume $(G/H,g_{\epsilon,\delta})$ to be geodesic orbit, we have $U(X,X) =0$ for all $X\in\fm$, meaning that
    \begin{equation}
        (\nabla^g_{X^\ast} X^\ast)_H = 0, \qforq X\in\fm,
    \end{equation}
    So $(\nabla^{\phi}_{\nabla^g_{X^\ast}X^\ast}v)_H = 0$. Finally, since $\{X_i^\ast\}_{i=1}^m$ is an orthonormal frame for $g_{\epsilon,\delta}$ on $\pi_H(N\cdot N_K)$, we conclude
    \begin{equation}
        \Psi^{-1}(\roughtil v)(e)
        =-\sum_{i=1}^m(X_i^2f)(e)
        -2\sum_{i=n+1}^m\bigl((X_i\circ\ad(X_i))f\bigr)(e)
        -\sum_{i=n+1}^m(\ad(X_i)^2f)(e).
    \end{equation}
    On the other hand, since $(G/K,h_\delta)$ is a symmetric space, its
    curvature tensor at the origin is given by
    \begin{equation}
        R^h(\Tilde{X},\Tilde{Y})\Tilde{Z}_K
        =-[[X,Y],Z], \qforq X,Y,Z\in\fq.
    \end{equation}
    Therefore, the curvature term of the Jacobi operator satisfies
    \begin{equation}
        \Psi^{-1}\left(\sum_{i=1}^n
        R^h\bigl(v,d\phi(X_i^\ast)\bigr)d\phi(X_i^\ast)\right)(e)
        =-\sum_{i=1}^n(\ad(X_i)^2f)(e).
    \end{equation}
    Putting all this together, in terms of the basis $\{Y_i\}$ of $\fg$, we get globally
    \begin{small}
    \begin{equation}\label{eq:big-J}
    \begin{aligned}
        \Psi^{-1}\circ\cJ_\phi\circ\Psi
        =&\;\delta^2(-D_G+D_K)
        +\epsilon^2\left(-D_K+\sum_{i=m+1}^sY_i^2\right)\\
        &+2\epsilon^2\left(-\sum_{i=n+1}^sY_i\circ\ad(Y_i)
        +\sum_{i=m+1}^sY_i\circ\ad(Y_i)\right)\\
        &+\epsilon^2\left(-D^\ad_K+\sum_{i=m+1}^s\ad(Y_i)^2\right)
        +\delta^2(D^\ad_G-D^\ad_K).
    \end{aligned}
    \end{equation}
    \end{small}
    Now, for $X\in\fh$ we have
    \begin{equation}
        Xf =  \left.\frac{d}{dt}\right\rvert_{t=0}f\circ R_{\exp(tX)} = \ddto \Ad(\exp(-tX))f = -\ad(X)f,
    \end{equation}
    Thus, as operators on $C^\infty_H(G,\fq)$, for
    $i\in\{m+1,\cdots,s\}$ we can write
    \begin{equation}\label{eq:squares-h}
         Y_i^2 = \ad(Y_i)^2 = - Y_i\circ\ad(Y_i).
    \end{equation}
    Also, for each $X\in\fg$ there holds $\Theta(X)^2 = X^2 + 2X\circ\ad(X) + \ad(X)^2$
    and as a consequence
    \begin{equation}\label{eq:D_theta}
        D_\Theta = D_K + D^\ad_K + 2\sum_{i=n+1}^sY_i\circ \ad(Y_i)
    \end{equation}
    To obtain~\eqref{eq:jacobian-pieces} we just plug equations~\eqref{eq:squares-h} and~\eqref{eq:D_theta} into equation~\eqref{eq:big-J}.
    \end{proof}

\subsection{Representation theory}
Let $G$ be a compact connected Lie group with Lie algebra $\mathfrak g$.
We fix a maximal torus $T\subset G$ and denote by
$\mathfrak t_{\mathbb C}\subset \mathfrak g_{\mathbb C}$ the corresponding
Cartan subalgebra of the complexified Lie algebra.  Let
$\Delta_G\subset\ft_\C^\ast$ be the corresponding root system and fix a
set of positive roots $\Delta_G^+$.  For each $\alpha\in\Delta_G$, let
\begin{equation}
    (\fg_\C)_\alpha
    =\{X\in\fg_\C:[H,X]=\alpha(H)X\ \text{for every }H\in\ft_\C\}
\end{equation}
be the associated root space.  It is well known that each root space is
one-dimensional; thus, for each
$\alpha\in\Delta_G$, we may choose $X_\alpha\in\fg_\C$ such that
$(\fg_\C)_\alpha=\C X_\alpha$.  We denote by
$\rho_G=\frac{1}{2}\sum_{\alpha\in\Delta_G^+}\alpha$ the half-sum of the
positive roots.

Given a complex representation $\pi:G\to\GL(\cH)$, over a complex vector space $\cH$, we consider its differential $\pi_\ast:\fg_\C\to\gl(\cH)$, which is a representation of $\fg_\C$, and define, for each $\mu\in\ft_\C^\ast$, the corresponding weight space

\[\cH(\mu) \coloneq \{v\in \cH\;|\; \pi_\ast(X)v = \mu(X)v, \quad \forall\,X\in\ft_\C\}.\]
The set of weights of $\pi$ is then defined as
\[\Lambda(\cH) \coloneqq \{\mu\in \mathfrak{t}_\C^\ast\;|\;\cH(\mu) \neq 0\},\]
and since the action of $\pi_\ast(\ft_\C)$ on $\cH$ is diagonalizable we have $\cH = \bigoplus_{\mu\in\Lambda(\cH)}\cH(\mu)$. If $\pi$ is irreducible, then there is a unique  $\lambda\in\Lambda(\cH)$ satisfying $\pi_\ast(X_\alpha)\cH(\lambda) = 0$ for all $\alpha\in\Delta^+_G$. The weight $\lambda$ is called the highest weight of the irreducible representation $\pi$. The set $\Lambda^+_G$ of all the possible highest weights of irreducible representations of $G$ is called the set of dominant integral weights, and there is a bijective correspondence between this set and the isomorphism classes of irreducible representations of $G$. For each $\lambda\in\Lambda^+_G$, we denote the corresponding irreducible representation by $\pi^G_\lambda:G\to\GL(\cH^G_\lambda)$.

We fix an $\Ad(G)$-invariant inner product $\bangle{\cdot,\cdot}$ on $\fg$, and use the same notation for the induced bilinear form on $\ft^\ast_\C$. For a representation $\pi:G\to\GL(\cH)$ and an orthonormal basis $\{X_i\}_{i=1}^s$ of $\fg$ with respect to $\bangle{\cdot,\cdot}$, we define the Casimir operator $\Cas_G^\pi:\cH\to \cH$ by
\[\Cas_G^\pi = \sum_{i=1}^s \pi_\ast(X_i)^2.\]
If $\pi=\pi^G_\lambda$ is irreducible, then
\begin{equation}
    \Cas_G^{\pi} = - \bangle{\lambda+2\rho_G,\lambda}\id_{\cH}.
\end{equation}
The number $\bangle{\lambda+2\rho_G,\lambda}$ will be denoted by $c_G(\lambda)$ and called the Casimir eigenvalue of $G$ on $\pi^G_\lambda$.

Now, let $H\subset G$ be a closed subgroup and $\tau:H\to\GL(\cW_\tau)$ be a complex representation of $H$. The space of $H$-equivariant functions $C^\infty_H(G,\cW_\tau)$, defined analogously as in~\eqref{def:H-eq-functions}, is the representation space of the induced $G$-representation $\Ind_H^G\tau:G\to\GL(C^\infty_H(G,\cW_\tau))$ defined by
\begin{equation}
    (\Ind_H^G\tau)(a)f = f\circ L_{a^{-1}}, \qforq a\in G,\, f\in C^\infty_H(G,\cW_\tau).
\end{equation}
The representation $\Ind_H^G\tau$ is also called the left regular representation, it is a infinite-dimensional representation of $G$, and its decomposition into irreducible $G$-representations is given by
\begin{equation}
    C^\infty_H(G,\cW_\tau) \cong \bigoplus_{\lambda\in\Lambda_G^+} \mathcal H_\lambda^G\otimes \Hom_H(\mathcal H_\lambda^G,\cW_\tau).
\end{equation}
For each dominant weight $\lambda\in\Lambda^+_G$, we call the space $\cM^\tau_\lambda \coloneqq \mathcal H_\lambda^G\otimes \Hom_H(\mathcal H_\lambda^G,\cW_\tau)$ the $\lambda$-isotypical component of $C^\infty_H(G,\cW_\tau)$ and $m_\lambda^\tau \coloneqq \dim_\C \Hom_H(\mathcal H_\lambda^G,\cW_\tau)$ its multiplicity. The inclusion of $\cM^\tau_\lambda$ into $C^\infty_H(G,\cW_\tau)$ is given by the mapping $v\otimes P\mapsto f_{v,P}$, where for $x\in G$, we define
\begin{equation}
    f_{v,P}(x) \coloneqq P(\pi^G_\lambda(x)^{-1}v).
\end{equation}
This discussion is useful because of the following result, which is classical for spectral theory of homogeneous spaces, and identifies the action of the Jacobian operator $\cJ_\phi$ with the action of a few Casimir operators.

Before continuing, we recall that if $\pi:G\to\GL(\cH)$ is a real irreducible representation, then it can be of real, complex and quaternionic type, see~\cite[Chapter II]{Brocker2010-sv}. If $\pi$ is of real type, then $(\pi,\cH_\C)\coloneqq(\tau,\cV)$ is complex irreducible. If $\pi$ is of complex type, then there exists a complex irreducible representation $(\tau,\cV)$ such that $\cH_\C = \cV\oplus\cV^\ast$. Finally, if $\pi$ is of quaternionic type, then there exists a complex irreducible representation $(\tau,\cV)$ such that $\cH_\C = \cV\oplus\cV$. In any case, we shall define the highest weight of $\pi$ to be the highest weight of the associated complex irreducible representation $(\tau,\cV)$.
\begin{lemma}\label{prop:Ds}
    Under the notation of Theorem~\ref{thm:Urakawa}, denote by $(\fq,\Ad_\fq)$ the isotropy representation of $G/K$. Assume also that $G$ is simple. For each $\lambda\in\Lambda^+_G$, and $f_{v,P}\in\cW^{\Ad_\fq}_\lambda \coloneqq \cW_\lambda$, the following hold:
    \begin{enumerate}
        \item $D_G(f_{v,P})= f_{\Cas_G^\lambda v,P}$;
        \item $D_K(f_{v,P}) = f_{v,P\circ \Cas_K^{\lambda}}$;
        \item $D_\Theta(f_{v,P})= f_{v,\Cas_K^{\lambda,\fq}(P)}$ and
        \item $(D^\ad_G-D^\ad_K)(f_{v,P}) = (c_K(\mu_{\fq}) - c_G(\mu_{\fg})) f_{v,P}$, where $\mu_\fq$ ( resp. $\mu_\fg$) is the highest weight of $\Ad_\fq$ (resp. $\Ad$).
    \end{enumerate}
    Here, $\Cas^\lambda_K$ (resp. $\Cas^{\lambda,\fq}_K$) is Casimir operator of the representation $\pi^G_\lambda\rvert_{K}$ (resp. $(\pi^G_\lambda)^\ast\otimes\Ad_\fq$).
\end{lemma}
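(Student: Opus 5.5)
The plan is to compute each operator directly on the matrix coefficients $f_{v,P}(x)=P(\pi^G_\lambda(x)^{-1}v)$. The one basic identity needed is the right derivative of such a function. For $X\in\fg$, write $w=\pi^G_\lambda(x)^{-1}v$. Since $\pi^G_\lambda(\exp(tX))^{-1}=\pi^G_\lambda(\exp(-tX))$, we get
\[
    (Xf_{v,P})(x)=\ddto P\bigl(\pi^G_\lambda(\exp(-tX))\,w\bigr)=-P\bigl((\pi^G_\lambda)_\ast(X)\,w\bigr).
\]
Iterating gives $(X^2f_{v,P})(x)=P\bigl((\pi^G_\lambda)_\ast(X)^2w\bigr)$. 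Everything else is bookkeeping with the orthonormal basis $\{Y_k\}$ of Theorem~\ref{thm:Urakawa}. The indices $k\leq n$ span $\fq$ and the indices $k\geq n+1$ span $\fk=\fp\oplus\fh$, orthonormally for $\bangle{\cdot,\cdot}$.

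For (1), I will sum the iterated identity over all $k$ to get $D_Gf_{v,P}(x)=P(\Cas_G^\lambda\,\pi^G_\lambda(x)^{-1}v)$. Since $\Cas_G^\lambda$ commutes with $\pi^G_\lambda(x)$, this equals $f_{\Cas_G^\lambda v,P}(x)$. For (2), I will sum only over $k\geq n+1$, which gives $P\circ\Cas_K^\lambda$ applied to $w$, that is, $f_{v,P\circ\Cas_K^\lambda}$. I must also check that $P\circ\Cas_K^\lambda$ is again in $\Hom_H(\cH^G_\lambda,\fq)$. This holds because $\Cas_K^\lambda$ commutes with $\pi^G_\lambda(K)\supseteq\pi^G_\lambda(H)$.

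For (3), I combine the derivative identity with $\ad(X)f_{v,P}(x)=\ad(X)P(w)$. This gives $\Theta(X)f_{v,P}=f_{v,X\cdot P}$, where
\[
    X\cdot P=\ad(X)\circ P-P\circ(\pi^G_\lambda)_\ast(X).
\]
This is exactly the differential of the natural action of $K$ on $\Hom(\cH^G_\lambda,\fq)\cong(\cH^G_\lambda)^\ast\otimes\fq$, namely the representation $(\pi^G_\lambda)^\ast\otimes\Ad_\fq$ restricted to $K$. Here $\ad(X)$ preserves $\fq$ for $X\in\fk$, since $[\fk,\fq]\subset\fq$. Applying the identity twice and summing over $k\geq n+1$ yields $D_\Theta f_{v,P}=f_{v,\Cas_K^{\lambda,\fq}(P)}$. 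An individual $X\cdot P$ need not be $H$-equivariant, but the formula $f_{v,Q}$ makes sense for any linear map $Q$. Moreover $\Cas_K^{\lambda,\fq}$ commutes with the $K$-action, hence preserves $\Hom_H$, so the final result is a genuine section.

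For (4), the operators act only on the values of $f$, so it suffices to identify $\sum_k\ad(Y_k)^2$ on $\fq$. Summed over all $k$, this is the Casimir of the adjoint representation of $G$ applied to $\fq\subset\fg$. Since $G$ is simple, $\Ad$ is irreducible and this Casimir is $-c_G(\mu_\fg)$. Summed over $k\geq n+1$, it is the Casimir of $K$ on $\fq$. I expect the main obstacle here, in justifying that this is the scalar $-c_K(\mu_\fq)$. I would use that $G/K$ is irreducible symmetric, as assumed in Theorem~\ref{thm:Urakawa}, so $\Ad_\fq$ is real irreducible. Its complexification is then $\cV$, $\cV\oplus\cV^\ast$ or $\cV\oplus\cV$. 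In every case the Casimir acts by the same scalar on each summand, since $c_K$ of the highest weight of $\cV^\ast$ equals $c_K$ of that of $\cV$ (the Casimir is invariant under passing to the dual). This gives a scalar $-c_K(\mu_\fq)$ with $\mu_\fq$ the highest weight in the convention fixed above. Subtracting gives the claimed eigenvalue $c_K(\mu_\fq)-c_G(\mu_\fg)$. Throughout I will keep track of the sign conventions, so that $\Cas=-c(\lambda)$, consistently with the normalisation of $\bangle{\cdot,\cdot}$.
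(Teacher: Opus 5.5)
Your proposal is correct and follows essentially the same route as the paper. The right-derivative identity $(Xf_{v,P})(x)=-P\bigl((\pi^G_\lambda)_\ast(X)\pi^G_\lambda(x)^{-1}v\bigr)$, summed over the appropriate part of the orthonormal basis $\{Y_k\}$, gives (1)--(3), and (4) follows from the irreducibility of $\fg$ under $G$ and of $\fq$ under $K$. Your extra checks supply details the paper leaves implicit: that $P\circ\Cas_K^\lambda$ and $\Cas_K^{\lambda,\fq}(P)$ remain $H$-equivariant, and that the $K$-Casimir takes the same value on $\cV$ and $\cV^\ast$ when $\fq$ is of complex type.
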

\begin{proof}
    The first three items follow directly from the fact that, for any $X\in \fg$ we have
    \begin{equation}\label{eq:action-on-eq-functions}
        \begin{aligned}
            (Xf_{v,P})(x) = \left.\frac{d}{dt}\right\rvert_{t=0} P(\pi^G_\lambda(x\exp(tX))^{-1}v) =& \left.\frac{d}{dt}\right\rvert_{t=0} P(\pi^G_\lambda(\exp(-tX))\pi^G_\lambda(x)^{-1}v)\\
            =&\,- P((\pi_\lambda^G)_\ast(X)\pi_\lambda^G(x)^{-1}v).
        \end{aligned}
    \end{equation}
    Indeed, from equation~\eqref{eq:action-on-eq-functions} we can prove the second item, and also
    \begin{equation*}
        D_G(f_{v,P})(x) = P(\Cas_G^\lambda\pi_\lambda^G(x)^{-1}v) = P(\pi_\lambda^G(x)^{-1}\Cas_G^\lambda v) = f_{\Cas_G^\lambda v,P}.
    \end{equation*}
    because $\Cas_G^\lambda$ commutes with $G$-representations.
    Finally, since $G$ is simple, $\fg$ is an irreducible representation 
    \begin{equation*}
        D^\ad_{G}f_{v,P} = \Cas_G^{\Ad}\circ f_{v,P} = - c_G(\mu_\fg) f_{v,P}.
    \end{equation*} Analogously, $G/K$ being irreducible means that $\fq$ is irreducible, so
    \begin{equation*}
        D^\ad_{K}f_{v,P} = \Cas_K^{\ad_\fq}\circ f_{v,P} = - c_K(\mu_\fq) f_{v,P}.\qedhere
    \end{equation*}
\end{proof}
\begin{definition}\label{def:C-lambda}
    Let $\lambda\in\Lambda_G^+$ and let $\mu_1,\mu_2\in\Lambda_K^+$. We define
    \[
    C_{\lambda}(\mu_1,\mu_2)\coloneqq \delta^2\Big(c_G(\lambda)-c_K(\mu_1)+c_K(\mu_{\fq})-c_G(\mu_{\fg})\Big)+\epsilon^2c_K(\mu_2).
    \]
\end{definition}

\begin{theorem}\label{thm:J-eigenvalues}
    Let $H\subset K\subset G$ be compact Lie groups, such that $G$ is simple and $G/K$ is an irreducible symmetric space with orthogonal decompositions $\fg=\fk\oplus\fq$ and $\fk=\fh\oplus\fp$, w.r.t an $\Ad(G)$-invariant inner product $\bangle{\cdot,\cdot}$ on $\fg$. Define the inner products $\delta^{-2}\bangle{\cdot,\cdot}\rvert_{\fq}$ on $\fq$ and $\epsilon^{-2}\bangle{\cdot, \cdot}\rvert_{\fp}$ on $\fp$, for some $\delta,\epsilon>0$, inducing $G$-invariant metrics $g_{\epsilon,\delta}$ (resp. $h_{\delta}$) on $G/H$ (resp. $G/K$). Let $\phi:(G/H,g_{\epsilon,\delta})\to (G/K,h_\delta)$ be the harmonic map defined by $\phi(xH) = xK$, and denote by $\cJ_\phi$ its Jacobi operator. Fixing a maximal Abelian subalgebra $\ft\subset \fg$, we set the following notations:
    \begin{enumerate}
        \item $\mu_\fg\in\Lambda_G^+$ (resp. $\mu_\fq\in\Lambda_K^+$) denotes the highest $G$-weight (resp. $K$-weight) of the adjoint representation $\Ad$ (resp. the isotropy representation $\Ad_\fq$). Also let $(\pi_{\fq},\cV_\fq)$ be the associated complex irreducible representation;
        \item For $\lambda\in\Lambda_G^+$, let $\pi^G_\lambda = \pi^K_{\lambda_1}\oplus\cdots\oplus\pi^K_{\lambda_d}$ be the decomposition as $K$-irreducible representations of $\pi^G_\lambda$;
        \item For each $i\in\{1,\cdots,d\}$, consider the decompositions $(\pi^K_{\lambda_i})^\ast\otimes\pi_\fq = \pi^K_{\lambda_{i1}}\oplus \cdots\oplus\pi^K_{\lambda_{ir_i}}$ and $(\pi^K_{\lambda_i})^\ast\otimes(\pi_\fq)^\ast = \pi^K_{\bar{\lambda}_{i1}}\oplus \cdots\oplus\pi^K_{\bar{\lambda}_{is_i}}$ into $K$-irreducible representations.
    \end{enumerate}
    Then the following table gives the eigenvalues of $\cJ_\phi$, with the corresponding multiplicities, depending on the type of $\Ad_\fq$. We use the notations $d_\lambda = \dim_\C\cH^G_\lambda$, $m_{\lambda_{ij}}=\dim_\C\Hom_H(\cH^K_{\lambda_{ij}},\C)$ and $m_{\bar{\lambda}_{ij}}=\dim_\C\Hom_H(\cH^K_{\bar{\lambda}_{ij}},\C)$.
    \begin{table}[ht]
    \centering
    \begin{tabular}{lcc}
    \toprule
    Type of $\Ad_{\fq}$ & Eigenvalues & Multiplicities \\[1ex]
    \midrule
    Real          & $C_{\lambda}(\lambda_i,\lambda_{ij})$ & $m_{\lambda_{ij}}d_\lambda$ \\[1ex]
    Complex       & $C_{\lambda}(\lambda_i,\lambda_{ij}),\, C_{\lambda}(\lambda_i,\bar{\lambda}_{ij})$ & $m_{\lambda_{ij}}d_\lambda$,  $m_{\bar{\lambda}_{ij}}d_\lambda$ \\[1ex]
    Quaternionic  & $C_{\lambda}(\lambda_i,\lambda_{ij})$ & $2m_{\lambda_{ij}}d_\lambda$ \\[1ex]
    \bottomrule
    \end{tabular}
    \label{tab:adfp-types}
    \end{table}

\end{theorem}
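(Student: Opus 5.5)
Write $\cJ_\phi$ through Theorem~\ref{thm:Urakawa} as $\cJ_\phi=\delta^2(D_K-D_G+D_G^\ad-D_K^\ad)-\epsilon^2D_\Theta$, work on the complexification, and reduce to one isotypical component at a time. Complexify the section space: $\Gamma(E)\otimes\C\cong C^\infty_H(G,\fq_\C)$. Then split $\fq_\C$ according to the type of $\Ad_\fq$. In the real type $\fq_\C=\cV_\fq$ is irreducible. In the complex type $\fq_\C=\cV_\fq\oplus\cV_\fq^\ast$. In the quaternionic type $\fq_\C\cong\cV_\fq\oplus\cV_\fq$. Each summand $\cW$ gives $C^\infty_H(G,\cW)\cong\bigoplus_\lambda\cH^G_\lambda\otimes\Hom_H(\cH^G_\lambda,\cW)$. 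The operators $D_G,D_K,D_\Theta,D^\ad_\ast$ are defined by right derivatives and by the target action, so they commute with the left regular action and preserve each $\cM_\lambda$. They also act only on the $\Hom_H$ factor, in the form $v\otimes P\mapsto v\otimes(\cdot)$; for $D_G$ this holds because $\Cas_G^\lambda$ is scalar. Lemma~\ref{prop:Ds} makes this explicit. On $\cM_\lambda$ we get
\[
\cJ_\phi(v\otimes P)=v\otimes\Bigl(\delta^2\bigl(c_G(\lambda)-c_G(\mu_\fg)+c_K(\mu_\fq)\bigr)P+\delta^2P\circ\Cas_K^\lambda-\epsilon^2\Cas_K^{\lambda,\fq}(P)\Bigr),
\]
with the signs coming from $\Cas=-c$. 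The problem is therefore to diagonalise two commuting operators on the finite-dimensional space $\Hom_H(\cH^G_\lambda,\cW)$.

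Next, identify $\Hom_H(\cH^G_\lambda,\cW)\cong\bigl((\cH^G_\lambda)^\ast\otimes\cW\bigr)^H$. Branch $\cH^G_\lambda=\bigoplus_i\cH^K_{\lambda_i}$, so this space splits as $\bigoplus_i\bigl((\cH^K_{\lambda_i})^\ast\otimes\cW\bigr)^H$. On the $i$-th summand, $P\mapsto P\circ\Cas_K^\lambda$ acts as $-c_K(\lambda_i)$. Here one must check that the dual of $\Cas$ on a $K$-irrep has eigenvalue $-c_K(\lambda_i)$; this holds because Casimir eigenvalues are invariant under duality. The operator $\Cas_K^{\lambda,\fq}$ is the Casimir of $K$ acting on $(\pi^K_{\lambda_i})^\ast\otimes\cW$, and it preserves $H$-invariants because it is $K$-central. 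Decompose $(\pi^K_{\lambda_i})^\ast\otimes\pi_\fq=\bigoplus_j\pi^K_{\lambda_{ij}}$. The $H$-invariants of the $j$-th piece have dimension $m_{\lambda_{ij}}$, and on them $\Cas_K$ equals $-c_K(\lambda_{ij})$. Summing up, the eigenvalue on the $(i,j)$ piece is
\[
\delta^2\bigl(c_G(\lambda)-c_K(\lambda_i)+c_K(\mu_\fq)-c_G(\mu_\fg)\bigr)+\epsilon^2c_K(\lambda_{ij})=C_\lambda(\lambda_i,\lambda_{ij}).
\]
Its complex multiplicity is $d_\lambda m_{\lambda_{ij}}$, the factor $d_\lambda$ coming from $\cH^G_\lambda$.

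Finally, pass from complex to real multiplicities, which is where the type enters and where I expect most care to be needed. The point is that the $\R$-dimension of an eigenspace of the real operator equals the $\C$-dimension of the corresponding eigenspace on the complexification.
\begin{itemize}
\item Real type: $\fq_\C=\cV_\fq$, which gives the first row directly.
\item Complex type: the summand $\cV_\fq^\ast=\pi_\fq^\ast$ produces the eigenvalues $C_\lambda(\lambda_i,\bar\lambda_{ij})$ with multiplicities $m_{\bar\lambda_{ij}}d_\lambda$.
\item Quaternionic type: the two copies of $\cV_\fq$ give the factor $2$.
\end{itemize}
The main obstacle is the bookkeeping behind these identifications. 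First, the sign and duality conventions between $f_{v,P}$ and $(\pi_\lambda^G)^\ast\otimes\Ad_\fq$ have to match. Second, $\Hom_H(\cH^K_{\lambda_{ij}},\C)$ must be identified with $H$-invariants; these have equal dimension because $H$ is compact and so its representations are unitary. Third, one must check that $D_\Theta$ really restricts to the tensor Casimir. This last point uses the $H$-part of the sum, via~\eqref{eq:squares-h}, together with the fact that $\{Y_k\}_{k>n}$ is orthonormal for $\fk$. Finally, eigenvalues obtained from different $(\lambda,i,j)$ may coincide, so the table should be read as listing eigenvalues with their multiplicities added.
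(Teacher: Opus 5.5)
Your proposal is correct and follows the paper's route: write $\cJ_\phi$ via Theorem~\ref{thm:Urakawa}, use Lemma~\ref{prop:Ds} to turn each piece into a Casimir acting on the $\Hom_H$ factor of $\cH^G_\lambda\otimes\Hom_H(\cH^G_\lambda,\cW)$, branch to $K$, decompose $(\pi^K_{\lambda_i})^\ast\otimes\pi_\fq$ (resp.\ $\otimes\pi_\fq^\ast$), and account for the type of $\Ad_\fq$. The only difference is that you spell out points the paper leaves implicit: the complexification of $\fq$, the identification of $\Hom_H(\cH^G_\lambda,\cW)$ with the $H$-invariants of $(\cH^G_\lambda)^\ast\otimes\cW$, and the matching of real eigenspace dimensions with complex dimensions after complexification.
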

\begin{proof}
    For $\lambda\in\Lambda_G^+$, we have
    \begin{equation*}
        \Hom_H(\cH^G_\lambda,\fq) = \begin{cases}\displaystyle\bigoplus_{j=1}^{r_i}\bigoplus_{i=1}^d (\cH^K_{\lambda_{ij}})_0, &\text{ if }\fq\text{ is of real type};\\
        \displaystyle\bigoplus_{j=1}^{r_i}\bigoplus_{i=1}^d (\cH^K_{\lambda_{ij}})_0\oplus (\cH^K_{\bar{\lambda}_{ij}})_0, &\text{ if }\fq\text{ is of complex type};\\
        \displaystyle\bigoplus_{j=1}^{r_i}\bigoplus_{i=1}^d (\cH^K_{\lambda_{ij}})_0\oplus (\cH^K_{\lambda_{ij}})_0, &\text{ if }\fq\text{ is of quaternionic type};\\
        \end{cases}
    \end{equation*}
    where for a $K$-representation $(\pi,\cH)$, the notation $(\cH)_0$ stands for its $H$-trivial part. Now, take $f_{v,P}$ such that $v\in\cH^G_\lambda$ and $P\in(\cH^K_{\lambda_{ij}})_0\subset \cH^K_{ij}\subset \Hom_H(\cH^K_{\lambda_i},\fq)$. We compute, with help of Lemma~\ref{prop:Ds}
    \begin{equation*}
        \begin{aligned}
            D_G(f_{v,P}) &= f_{\Cas^\lambda_Gv,P} = - c_G(\lambda) f_{v,P}\\
            D_\Theta(f_{v,P}) &= f_{v,\Cas_K^{\lambda,\fq}(P)} = -c_K(\lambda_{ij})f_{v,P}.
    \end{aligned}
    \end{equation*}
    Also, for $x\in G$, decompose $\pi^G_\lambda(x)^{-1}v = \sum_{j=1}^d w_i$ for $w_j\in\cH^K_{\lambda_j}$. Since $P\in \Hom_H(\cH^K_{\lambda_i},\fq)$, we have $f_{v,P}(x) = P(w_i)$. On the other hand, Lemma~\ref{prop:Ds} again gives us
    \begin{equation}
        (D_K f_{v,P})(x) = \sum_{j=1}^dP\left(\Cas_K^{\lambda}w_j\right) = -\sum_{j=1}^dc_K(\lambda_j)P\left(w_j\right) = - c_K(\lambda_i)P(w_i),
    \end{equation}
    and therefore $(D_K f_{v,P}) = -c_K(\lambda_i)f_{v,P}$, since it is valid for any $x\in G$. The equality $(D^\ad_G-D^\ad_K)f_{v,P} = (c_K(\mu_\fq)-c_G(\mu_\fg))f_{v,P}$ follows directly from~\ref{prop:Ds}, Item 4. These computations together with~\ref{thm:Urakawa} imply that
    \begin{equation}
        \cJ_\phi(f_{v,P}) = C_{\lambda}(\lambda_i,\lambda_{ij})f_{v,P}.
    \end{equation}
    This proves that $\cH^G_\lambda\otimes(\cH^K_{\lambda_{ij}})_0$ is an eigenspace of $\cJ_\phi$, for the eigenvalue $C_{\lambda}(\lambda_i,\lambda_{ij})$, with dimension $m_{\lambda_{ij}}d_\lambda$. This fully describe the spectrum of $\cJ_\phi$ for the case $\Ad_\fq$ is of real type. If $\Ad_\fq$ is of complex type, then an analogous argument using $P\in(\cH^K_{\bar{\lambda}_{ij}})_0$ gives the eigenvalues $C_{\lambda}(\lambda_i,\bar{\lambda}_{ij})$ with the corresponding multiplicities. Finally, if $\Ad_\fq$ is of quaternionic type, then all the eigenspaces are doubled, because $\fq_\C = \cV_\fq\oplus\cV_\fq$, hence all the multiplicities are doubled.
\end{proof}
\begin{remark}\label{rem:real-vs-complex-multiplicity}
    Suppose that $\fq$ is of complex type and write
    $\fq_\C=\cV_\fq\oplus\overline{\cV_\fq}$. There is a useful symmetry
    between the two summands. If $\lambda\in\Lambda^+_G$, then, following
    the notation of Theorem~\ref{thm:J-eigenvalues},
    \begin{equation}
        \Hom_H(\cH^G_\lambda,\fq_\C)
        \cong \Hom_H(\cH^G_{\lambda^\ast},\fq_\C)^\ast,
    \end{equation}
    where $\lambda^\ast\in\Lambda^+_G$ is the highest weight of the dual
    representation. In particular,
    $\lambda_{ij}=\overline{\lambda^\ast}_{ij}$ and
    $\lambda^\ast_{ij}=\overline{\lambda}_{ij}$, so the corresponding
    eigenvalues of the Jacobi operator agree and have the same multiplicity.
    Consequently, it is enough to consider the weights satisfying
    $\Hom_H(\cH^G_\lambda,\cV_\fq)\neq0$: they give the complete real
    spectrum, and every multiplicity obtained from this chosen complex
    summand must be doubled. Thus the eigenvalue associated with
    $\lambda_{ij}$ has real multiplicity
    $2m_{\lambda_{ij}}d_\lambda$.
\end{remark}
\section{Morse index of higher dimensional complex Hopf fibration}
In this section, we compute the Morse index and the nullity of the complex Hopf fibration $\phi:(\bbS^{2n+1},g_{\epsilon})\to(\C P^n,h)$, where $g_\epsilon$ and $h$ are the $\epsilon$-Berger metric and the Fubini-Study metric, respectively. We will follow the notations of~\cite{Ikeda78}. Letting $G = \SU(n+1)$, $H=\SU(n)$ and
\[K=\mathrm{S}(\U(1)\times \U(n)) = \left\{\begin{pmatrix}\det(A)^{-1}&0\\0&A\end{pmatrix}\in \GL(n+1,\C)\;|\; A\in \U(n)\right\},\]
Then $G/H\cong \bbS^{2n+1}$, $G/K\cong \C P^n$ and the Hopf map $\phi$ is just the projection $xH\mapsto xK$ we studied in \S~\ref{ssec:RHF}. Following the notations of the previous sections we have
\begin{equation}
    \begin{aligned}
        \fg &= \left\{X\in \gl(n+1,\C)\;|\; X+X^\ast = 0,\,\tr(X) = 0\right\} && \dim_\R \fg = n^2+2n\\
        \fk &= \left\{\begin{pmatrix}
            ia&0\\0&A
        \end{pmatrix}\;|\;A\in \fu(n)\text{ and }ia+\tr A = 0\right\} && \dim_\R \fk = n^2\\
        \fh &= \left\{\begin{pmatrix}
            0&0\\0&A
        \end{pmatrix}\;|\;A\in \su(n)\right\} && \dim_\R \fh = n^2-1\\
        \fq &= \left\{\begin{pmatrix}
            0&-v^\ast\\v&0
        \end{pmatrix}\;|\;v\in\C^n\right\} && \dim_\R \fq = 2n\\
        \fp &= \left\{\begin{pmatrix}
            ia&0\\0&-\frac{ia}{n}I
        \end{pmatrix}\;|\;a\in\R\right\} && \dim_\R \fp = 1.
    \end{aligned}
\end{equation}
Our $\Ad$-invariant inner product on $\fg$ will be given by
\[\bangle{X,Y} = -\frac{1}{2}\tr(XY) \qforq X,Y\in\fg.\]
With the notations of~\ref{thm:J-eigenvalues}, we fix $\epsilon>0$ and $\delta = 1$, and denote the corresponding $SU(n)$-invariant metrics by $g_\epsilon$ and $h$.
In this normalisation, let $g_{\mathrm{rd}}$ be the unit round metric, $\xi$ its unit Hopf vector field, and put $\eta=g_{\mathrm{rd}}(\xi,\cdot)$. The horizontal metric agrees with $g_{\mathrm{rd}}$, while the vertical generator $Z=\diag(i,-i/n,\ldots,-i/n)$ satisfies $\bangle{Z,Z}=(n+1)/(2n)$. Hence
\begin{equation}\label{eq:Berger-round-normalisation}
    g_\epsilon=g_{\mathrm{rd}}+\left(\frac{n+1}{2n\epsilon^2}-1\right)\eta\otimes\eta,
\end{equation}
so $g_\epsilon$ is the unit round metric precisely when $\epsilon^2=(n+1)/(2n)$.


As a maximal abelian sub-algebra of $\fg$ we choose
\begin{equation}
    \ft = \{\diag(it_1,\cdots, it_{n+1})\;|\; t_1+\cdots+t_{n+1}= 0\},
\end{equation}
and define $e_i\in\ft^\ast$ by $e_j(\diag(it_1,\cdots, it_{n+1})) = t_j$. Complexifying, we have $\fg^\C = \fsl(n+1,\C)$ and the set of positive roots is $\Delta^+_G = \{e_i-e_j\;|\;i<j\}$. We then have
\begin{equation}
    \begin{aligned}
        \Lambda^+_G &= \left\{\sum_{j=1}^{n}a_je_j\;|\; a_1\geq a_2\geq \cdots \geq a_{n}\geq 0\right\}\\
        \Lambda^+_K &= \left\{\sum_{j=1}^{n}a_je_j\;|\; a_2\geq a_3\geq \cdots \geq a_{n}\geq 0\right\}.
    \end{aligned}
\end{equation}
The isotropy representation $(\Ad_\fq,\fq)$ is of complex type, due to the complex structure on $\C P^n$, and then its complexification decomposes into $K$-irreducible pieces as
\begin{equation}
    \Ad_\fq = \pi^K_{-e_1+e_2}\oplus \pi^K_{2e_1+e_2+\cdots+e_n}.
\end{equation}
This is the algebraic counterpart of the Hodge decomposition. We choose
$\pi_\fq=\pi^K_{-e_1+e_2}$ as the complex irreducible representation
associated with $\Ad_\fq$ and determine the highest weights
$\lambda\in\Lambda_G^+$ for which
\begin{equation}\label{eq:contributing-component}
    \Hom_H(\cH^G_{\lambda},\cH^K_{-e_1+e_2})\neq0.
\end{equation}
The conjugate summand $\pi_\fq^\ast=\pi^K_{2e_1+e_2+\cdots+e_n}$
will be recovered later by complex conjugation and will not be included in
the list of weights below.
For this we use the classical branching rule for the restriction from $G=\SU(n+1)$ to $K=\mathrm{S}(\U(1)\times \U(n))\cong\U(n)$.
\begin{proposition}[{\cite{Ikeda78}}]\label{thm:Ikeda} Let $\lambda = \sum_{i=1}^n a_je_j\in\Lambda_G^+$ and $\mu = \sum_{j=1}^nb_je_j\in\Lambda^+_K$. Then $\pi_\mu^K$ is a $K$-irreducible component of $\pi_\lambda^G$ if and only if there exists $q\in\Z$ such that
    \begin{equation}\label{eq:branching}
        \begin{aligned}
            &a_1\geq b_2+q\geq a_2\geq b_3+q\geq \cdots \geq a_{n-1}\geq b_n+q\geq a_n\geq q\geq 0,\\
            &b_1=\sum_{j=1}^n a_j - \sum_{j=2}^n b_j - (n+1)q.
        \end{aligned}
    \end{equation}
    In this case, the multiplicity $m_\lambda^\mu$ is $1$.
\end{proposition}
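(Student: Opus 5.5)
This branching rule is the classical one, and the plan is to derive it by composing two standard interlacing rules. Identify $K=\mathrm{S}(\U(1)\times\U(n))$ with $\U(n)$ via $A\mapsto\diag(\det A^{-1},A)$. Restrict in two stages: first from $\U(n+1)$ to $\U(1)\times\U(n)$, then from there to the diagonal copy $K$. A dominant weight $\lambda=\sum_{j=1}^n a_je_j$ of $\SU(n+1)$ lifts to the $\U(n+1)$-representation with highest weight $(a_1,\dots,a_n,0)$. By the Gelfand--Tsetlin branching rule, its restriction to $\U(n)$ (acting on the last $n$ coordinates) is multiplicity free. The summands are the $\U(n)$-weights $(c_2,\dots,c_{n+1})$ interlacing $(a_1,\dots,a_n,0)$:
\[
a_1\geq c_2\geq a_2\geq\cdots\geq a_n\geq c_{n+1}\geq 0 .
\]
On such a summand, the central $\U(1)$ (first coordinate) acts by the character $|a|-|c|$, where $|a|=\sum a_j$ and $|c|=\sum c_j$. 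This is forced by weight bookkeeping, since the total degree is preserved.

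Next, express each summand as a weight of $K$ in the coordinates $e_1,\dots,e_n$ used by the paper. An element of $\ft\cap\fk$ is $\diag(it_1,\dots,it_{n+1})$ with $\sum t_j=0$, so $t_{n+1}=-t_1-\cdots-t_n$. The $\U(1)\times\U(n)$-weight $(|a|-|c|)e_1+\sum_{j\geq2}c_je_j+c_{n+1}e_{n+1}$ therefore restricts to
\[
(|a|-|c|-c_{n+1})e_1+\sum_{j=2}^n(c_j-c_{n+1})e_j .
\]
Set $q=c_{n+1}$ and $b_j+q=c_j$ for $2\leq j\leq n$. Then $b_2\geq\cdots\geq b_n\geq0$, so $\mu=\sum b_je_j\in\Lambda^+_K$. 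The interlacing becomes exactly the chain in \eqref{eq:branching}, with $a_n\geq q\geq 0$. The coefficient of $e_1$ is
\[
b_1=|a|-\sum_{j\geq2}b_j-(n-1)q-2q=|a|-\sum_{j=2}^n b_j-(n+1)q,
\]
which is the second line of \eqref{eq:branching}.

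For the converse and for multiplicity one, note that distinct interlacing tuples $c$ give distinct pairs $(q,b_2,\dots,b_n)$. We also need that two different $c$ cannot produce the same $K$-weight $\mu$. Given $\mu$, the value of $q$ is recovered from $b_1$ via $(n+1)q=|a|-\sum_{j\geq2}b_j-b_1$, so $q$ is uniquely determined. The tuple $c$ is then determined as well, and the multiplicity is $1$. Conversely, any $q$ satisfying the inequalities defines an interlacing $c$, so $\pi^K_\mu$ does occur.

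The main subtlety is the normalisation in the statement. The $K$-weights are written modulo the relation $\sum t_j=0$, and the lift of $\lambda$ uses last coordinate $0$. One must check that changing the lift by a central shift does not alter the restricted weight, and that the sign convention of the embedding (the factor $\det A^{-1}$ in the first slot) matches the weight $e_1$ as used in the paper. I would verify this directly on the defining representation $\lambda=e_1$. That case gives the two summands $\mu=e_1$ (with $q=0$) and $\mu=-ne_1+\cdots$, and comparing them against the explicit action of $K$ on $\C^{n+1}$ will confirm the formula for $b_1$.
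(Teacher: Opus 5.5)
Your argument is correct. It is the standard derivation of this rule; the paper gives no proof of its own and simply cites Ikeda--Taniguchi. You lift $\lambda$ to $(a_1,\dots,a_n,0)$, apply the Gelfand--Tsetlin interlacing rule for $\U(n+1)\downarrow\U(n)$, read off the $\U(1)$-character $|a|-|c|$, and restrict to $\ft$ with $q=c_{n+1}$ and $b_j=c_j-q$. This reproduces \eqref{eq:branching}, and recovering $q$ from $b_1$ gives multiplicity one. You use implicitly that each summand stays irreducible on $K$; this holds because $K$ maps onto the $\U(n)$ factor.

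The only blemish is the closing sanity check. It is unnecessary: $K\subset G$ shares the maximal torus $\ft$, so $K$-weights are literal restrictions and the identification $K\cong\U(n)$ never enters. Its predicted outcome is also wrong. For $\lambda=e_1$ your own formula gives $\mu=e_1$ and $\mu=e_2$, both with $q=0$, not $-ne_1+\cdots$.
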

\begin{lemma}\label{lemma:Hopf-branching}
    Assume $n\geq3$. The weights $\lambda\in\Lambda_G^+$ satisfying
    \begin{equation*}
        \Hom_H(\cH^G_{\lambda},\cH^K_{-e_1+e_2})\neq0
    \end{equation*}
    are exactly the weights
    \begin{equation}\label{eq:possible-lambdas}
        \begin{aligned}
            \lambda^{(1)}(a,k)
            &\coloneqq(2a-k-1)e_1+(a-k-1)(e_2+\cdots+e_n),
            &&a\geq1,\quad k\leq a-1,\\
            \lambda^{(2)}(a,k)
            &\coloneqq(2a-k)e_1+(a-k+1)e_2+(a-k)(e_3+\cdots+e_n),
            &&a\geq1,\quad k\leq a,
        \end{aligned}
    \end{equation}

    Write $\mu^{(1)}(\ell)=\ell e_1+e_2$ and
    $\mu^{(2)}(\ell)=\ell e_1+e_2+\cdots+e_n$. For the two families in
    \eqref{eq:possible-lambdas}, the $K$-irreducible components whose
    restriction to $H$ can contribute to $\Hom_H(\cH^G_\lambda,\fq_\C)$ are
    \begin{equation}\label{eq:relevant-K-components}
        \begin{aligned}
            \left.\pi^G_{\lambda^{(1)}(a,k)}\right|_K
            &=\tau^{(1)}\oplus\pi^K_{\mu^{(1)}(k)}\oplus
            \begin{cases}
                \pi^K_{\mu^{(2)}(k+3)},&k\leq a-2,\\
                0,&k=a-1,
            \end{cases}\\[1ex]
            \left.\pi^G_{\lambda^{(2)}(a,k)}\right|_K
            &=\tau^{(2)}\oplus\pi^K_{\mu^{(1)}(k)}\oplus
            \begin{cases}
                \pi^K_{\mu^{(2)}(k-1)},&n=3,\\
                0,&n\geq4.
            \end{cases}
        \end{aligned}
    \end{equation}
    Here $\tau^{(j)}$ is a $K$-representation satisfying
    $\Hom_H(\cH_{\tau^{(j)}},\fq_\C)=0$, and every displayed irreducible
    summand has multiplicity one.

    In the notation of Theorem~\ref{thm:J-eigenvalues}, choose
    $\pi_\fq=\pi^K_{\mu^{(1)}(-1)}$, so that
    $\pi_\fq^\ast=\pi^K_{\mu^{(2)}(2)}$. For every $k\in\Z$, the corresponding
    tensor products decompose into $K$-irreducible representations as follows:
    \begin{equation}\label{eq:K-tensor-decompositions}
        \begin{aligned}
            (\pi^K_{\mu^{(1)}(k)})^\ast\otimes\pi_\fq
            &=\pi^K_{-(k+1)e_1}\oplus\pi^K_{-ke_1+2e_2+e_3+\cdots+e_n},\\
            (\pi^K_{\mu^{(1)}(k)})^\ast\otimes\pi_\fq^\ast
            &=\pi^K_{(3-k)e_1+2(e_2+\cdots+e_n)}\oplus\pi^K_{(2-k)e_1+e_2+\cdots+e_{n-1}},\\
            (\pi^K_{\mu^{(2)}(k)})^\ast\otimes\pi_\fq
            &=\pi^K_{-ke_1+2e_2}\oplus\pi^K_{-ke_1+e_2+e_3},\\
            (\pi^K_{\mu^{(2)}(k)})^\ast\otimes\pi_\fq^\ast
            &=\pi^K_{(2-k)e_1}\oplus\pi^K_{(3-k)e_1+2e_2+e_3+\cdots+e_n}.
        \end{aligned}
    \end{equation}
    Every summand has multiplicity one. Among the highest weights displayed
    in~\eqref{eq:K-tensor-decompositions}, the only ones for which the
    $H$-trivial part is nonzero are
    \begin{equation}
        \bigl(\cH^K_{-(k+1)e_1}\bigr)_0\cong\C,
        \qquad
        \bigl(\cH^K_{(2-k)e_1}\bigr)_0\cong\C,
    \end{equation}
    arising from the first and fourth lines, respectively.  The $H$-trivial
    parts of all the other displayed summands vanish.
\end{lemma}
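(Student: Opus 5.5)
The plan is to reduce everything to branching from $K\cong\U(n)$ to $H=\SU(n)$, using the identification $\Hom_H(W,V)\cong(W^\ast\otimes V)^H$. Two facts drive the argument. First, every irreducible $\U(n)$-representation stays irreducible when restricted to $\SU(n)$, and two of them agree on $\SU(n)$ exactly when they differ by a power of $\det$. Second, in the weight coordinates of Proposition~\ref{thm:Ikeda}, the $\U(n)$-part of $\mu=\sum b_je_j\in\Lambda^+_K$ is read off from $(b_2,\dots,b_n)$, while the $e_1$-coordinate records the central character.

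It follows that $\Hom_H(\cH^K_\mu,\cH^K_{-e_1+e_2})\neq0$ if and only if $\mu|_H$ is the standard representation of $\SU(n)$. That happens exactly when $(b_2,\dots,b_n)=(1,0,\dots,0)$ up to adding a constant vector $(c,\dots,c)$. After normalising the constant, these are the weights $\mu^{(1)}(\ell)$, and when $n=3$ also $\mu^{(2)}(\ell)$, since $(1,1)\sim(0,-1)$ is another form of the dual pattern. I will need to check carefully which normalisation the lattice $\Lambda_K^+$ allows, for instance that $b_n\geq0$ forces the dual-type pattern to appear as $\mu^{(2)}$.

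Similarly, $\Hom_H(\cH^K_\mu,\fq_\C)\neq0$ picks out exactly the families $\mu^{(1)}(\ell)$ and $\mu^{(2)}(\ell)$. By Frobenius reciprocity along $H\subset K$, we have $\Hom_H(\cH^G_\lambda,\cdot)=\bigoplus_i\Hom_H(\cH^K_{\lambda_i},\cdot)$. So it suffices to find all $\lambda$ whose $K$-restriction contains some $\mu^{(1)}(k)$, and then list the $\mu^{(1)}$- and $\mu^{(2)}$-type summands of each. All remaining summands are collected into $\tau^{(j)}$.

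Next I apply the interlacing condition \eqref{eq:branching} to $\mu^{(1)}(k)$, where $b_2=1$ and $b_3=\dots=b_n=0$. This gives
\[
a_1\geq 1+q\geq a_2\geq q\geq a_3\geq\cdots\geq a_n\geq q .
\]
Hence $a_3=\dots=a_n=q$ and $a_2\in\{q,q+1\}$. The equation for $b_1$ then determines $a_1$ in terms of $k$ and $q$. The case $a_2=q$ gives the family $\lambda^{(1)}$ and the case $a_2=q+1$ gives $\lambda^{(2)}$, after reparametrising by $a$ and $k$. Uniqueness of $q$ gives multiplicity one.

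For each such $\lambda$ I then solve \eqref{eq:branching} again, now for $\mu^{(2)}(\ell)$ with $b_2=\dots=b_n=1$. The interlacing forces $a_j\in\{q,q+1\}$ at all the inner slots. For $\lambda^{(1)}$ this gives a solution exactly when $a_1\geq q+1$ with room to spare, which is the case $k\leq a-2$, and the equation for $b_1$ gives $\ell=k+3$. For $\lambda^{(2)}$ the condition $a_2=a_3+1$ together with $a_3\geq b_4+q$ is compatible only when there is no $b_4$ slot, that is, when $n=3$. This is the source of the $n=3$ versus $n\geq4$ dichotomy.

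Finally, the tensor products \eqref{eq:K-tensor-decompositions} come from Pieri's rule for $\U(n)$: tensoring with the standard representation or its dual adds or removes one box. The $e_1$-coordinates are tracked through the central characters, using $\pi_\fq=\pi^K_{-e_1+e_2}$ and $\pi_\fq^\ast=\pi^K_{2e_1+e_2+\dots+e_n}$. The $H$-trivial part of a summand is nonzero precisely when its $(b_2,\dots,b_n)$ is a constant vector, because only characters of $\U(n)$ have $\SU(n)$-invariants. Scanning the list, only $-(k+1)e_1$ and $(2-k)e_1$ qualify. The weight $(3-k)e_1+2(e_2+\cdots+e_n)$ must be excluded as well: although its pattern is constant, I expect it to fail the normalisation in which constant patterns are written with $b_2=\dots=b_n=0$, or else to be cancelled by the central constraint of $K$. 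I will check this against the defining relation of $K$. This step and the $e_1$ bookkeeping are where I expect the main difficulty to lie.
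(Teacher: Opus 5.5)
\textbf{Verdict: genuine gap.} Your route is the paper's: interlacing for $\mu^{(1)}(k)$ to get the two families, a second interlacing for the pattern $b_2=\cdots=b_n=1$, and the $\U(n)$ tensor rules for the last part. Where the paper lists all $K$-components, you solve the second interlacing directly, which is fine. Your interlacing steps are correct, with one imprecision: for $\lambda^{(1)}$ the binding condition is $q=a-k-2\geq0$, forced by $a_2=q+1$, while the inequality on $a_1$ is automatic.

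The gap is the dictionary between $\Lambda^+_K$ and $H$-representations, which your first and last steps rely on. Because $e_{n+1}$ has been eliminated, $\mu=\sum_{j=1}^n b_je_j$ restricts on the torus of $H$ to the $\SU(n)$-weight $(b_2,\dots,b_n,0)$. The last entry is already pinned at $0$. So there is no residual freedom ``up to a constant vector'', and distinct dominant tuples $(b_2,\dots,b_n)$ give non-isomorphic irreducible $H$-modules. Concretely:
\begin{itemize}
\item $\mu|_H\cong V$ iff $(b_2,\dots,b_n)=(1,0,\dots,0)$;
\item $\mu|_H\cong V^\ast$ iff $(b_2,\dots,b_n)=(1,\dots,1)$;
\item $(\cH^K_\mu)_0\neq0$ iff $(b_2,\dots,b_n)=0$.
\end{itemize}
``Normalising the constant'' is not legitimate either: $\ell e_1+2e_2+e_3+\cdots+e_n$ is a genuine element of $\Lambda^+_K$, and it restricts to $\mathfrak{sl}(V)$, not to $V$.

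This error has two consequences.

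First, your claim that for $n=3$ the weights $\mu^{(2)}(\ell)$ also restrict to the standard representation is false: $\mu^{(2)}(\ell)|_H\cong V^\ast$ for every $n$. If the claim held, the first list would be wrong. For $n=3$, $\lambda=e_1+e_2+e_3$ contains $\mu^{(2)}(1)$ but lies in neither family. Indeed $\Hom_H(\cH^G_\lambda,\cH^K_{-e_1+e_2})=0$, since $(\C^4)^\ast|_{\SU(3)}=\C\oplus(\C^3)^\ast$. Your interlacing uses only $\mu^{(1)}(k)$, so the list comes out right, but this contradicts your own stated criterion.

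Second, and more seriously, the final assertion of the lemma is not proved. Under your ``constant pattern'' criterion, $(3-k)e_1+2(e_2+\cdots+e_n)$ would have $H$-fixed vectors, and so would $\mu^{(2)}(\ell)$ itself. The remedies you suggest, a normalisation failure or a cancellation via a ``central constraint of $K$'', are not arguments: the central character is invisible to $H$. The correct reason is that this summand is $\chi_{1-k}\otimes\operatorname{Sym}^2V^\ast$. Its $H$-weight is $(2,\dots,2,0)$, a nontrivial irreducible $\SU(n)$-module.

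You can fix this by correcting the dictionary, or by following the paper. The paper writes every relevant representation as $\chi_t\otimes V$ or $\chi_t\otimes V^\ast$ and uses that $\chi_t$ is trivial on $H$. It then uses that $\mathfrak{sl}(V)$, $\operatorname{Sym}^2V$, $\Lambda^2V$ and their duals have no $H$-invariants for $n\geq3$. Either way, the scan gives exactly $-(k+1)e_1$ and $(2-k)e_1$. This is also where $n\geq3$ is used: for $n=2$ the summand $\chi_{1-k}\otimes\Lambda^2V^\ast$ would be a character and would contribute.
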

\begin{proof}
    The restriction of $\pi^K_{-e_1+e_2}$ to $H$ is the standard
    representation $\pi^H_{e_2}$. We therefore apply
    Proposition~\ref{thm:Ikeda} with $b_2=1$ and
    $b_3=\cdots=b_n=0$. The interlacing conditions~\eqref{eq:branching}
    become
    \[
        a_1\geq q+1\geq a_2\geq q\geq a_3\geq q\geq\cdots\geq a_n\geq q.
    \]
    Hence $a_3=\cdots=a_n=q$ and $a_2\in\{q,q+1\}$.
    If $a_2=q$, write $a_1=q+a$ with $a\geq1$ and put
    $q=a-k-1$. This gives $\lambda^{(1)}(a,k)$ with $k\leq a-1$.
    If $a_2=q+1$, the same parametrisation with $q=a-k$ gives
    $\lambda^{(2)}(a,k)$ with $k\leq a$.
    Conversely, each weight in these two families satisfies the above
    interlacing conditions. Hence~\eqref{eq:possible-lambdas} is exactly
    the list of highest weights for which
    $\Hom_H(\cH^G_\lambda,\cH^K_{-e_1+e_2})\neq0$.

    It remains to identify, for the representatives in
    \eqref{eq:possible-lambdas}, the $K$-components which can contribute.
    Applying the interlacing rule again gives
    \begin{equation*}
        \begin{aligned}
            \left.\pi^G_{\lambda^{(1)}(a,k)}\right|_K
            &=\bigoplus_{r=0}^{a}\bigoplus_{q=0}^{a-k-1}\pi^K_{\nu^{(1)}_{r,q}},\\
            \left.\pi^G_{\lambda^{(2)}(a,k)}\right|_K
            &=\bigoplus_{r=0}^{a-1}\bigoplus_{s=0}^{1}\bigoplus_{q=0}^{a-k}\pi^K_{\nu^{(2)}_{r,s,q}},
        \end{aligned}
    \end{equation*}
    where
    \begin{equation*}
        \begin{aligned}
            \nu^{(1)}_{r,q}
            &=(2a-k-1-r-2q)e_1+r e_2+(a-k-1-q)\sum_{j=2}^{n}e_j,\\
            \nu^{(2)}_{r,s,q}
            &=(2a-k-r-s-2q)e_1+(r+1)e_2+s e_3+(a-k-q)\sum_{j=2}^{n}e_j.
        \end{aligned}
    \end{equation*}
    Every summand occurs with multiplicity one.

    We retain only those components whose restrictions to $H$ occur in
    $\fq_\C$. In the first decomposition these are precisely
    \[
        \nu^{(1)}_{1,a-k-1}=\mu^{(1)}(k),
        \qquad
        \nu^{(1)}_{0,a-k-2}=\mu^{(2)}(k+3),
    \]
    where the second component occurs exactly when $k\leq a-2$. In the
    second decomposition,
    \[
        \nu^{(2)}_{0,0,a-k}=\mu^{(1)}(k)
    \]
    always occurs. If $n=3$, there is one further relevant component,
    \[
        \nu^{(2)}_{0,1,a-k}=\mu^{(2)}(k-1).
    \]
    If $n\geq4$, no further component of the form $\mu^{(2)}(\ell)$ occurs.
    Grouping all remaining irreducible summands into $\tau^{(j)}$ gives
    \eqref{eq:relevant-K-components}.

    Finally, let $V$ be the standard $n$-dimensional representation of $K$
    on the last $n$ coordinates and write $\chi_t=\pi^K_{te_1}$. Then
    \[
        \pi^K_{\mu^{(1)}(k)}=\chi_k\otimes V,
        \qquad
        \pi^K_{\mu^{(2)}(k)}=\chi_{k-1}\otimes V^\ast,
    \]
    while
    \[
        \pi_\fq=\chi_{-1}\otimes V,
        \qquad
        \pi_\fq^\ast=\chi_1\otimes V^\ast.
    \]
    Using
    \[
        V^\ast\otimes V\cong\C\oplus\mathfrak{sl}(V),
        \qquad
        V\otimes V\cong\operatorname{Sym}^2V\oplus\Lambda^2V,
    \]
    together with their duals, gives~\eqref{eq:K-tensor-decompositions}.
    Since $\chi_t$ restricts trivially to $H$, whereas
    $\mathfrak{sl}(V)$, $\operatorname{Sym}^2V$, $\Lambda^2V$ and their
    duals contain no trivial $H$-summand for $n\geq3$, we obtain
    \[
        \dim_\C\bigl(\cH^K_{-(k+1)e_1}\bigr)_0
        =\dim_\C\bigl(\cH^K_{(2-k)e_1}\bigr)_0=1,
    \]
    while the $H$-trivial parts of all the other components in
    \eqref{eq:K-tensor-decompositions} vanish.
\end{proof}
\begin{lemma}\label{lemma:Casimir-sun+1}
    Assume $n\geq3$, and let $\lambda^{(1)}(a,k)$ and $\lambda^{(2)}(a,k)$ be the weights in~\eqref{eq:possible-lambdas}, with the corresponding parameter ranges. Then
    \begin{small}
    \begin{equation*}
            \begin{aligned}
            c_G\bigl(\lambda^{(1)}(a,k)\bigr)
            &=
            \frac{2}{n+1}\Big[2(n+1)a(a-k+n-1)+nk(k-n+1)-n^2\Big],\\
            c_G\bigl(\lambda^{(2)}(a,k)\bigr)
            &=
            \frac{2}{n+1}\Big[2(n+1)a(a-k+n)+nk(k-n-1)-2k+n^2-2\Big].
            \end{aligned}
    \end{equation*}
    \end{small}
    For $\mu^{(1)}(k)=ke_1+e_2$ and $\mu^{(2)}(k)=ke_1+e_2+\cdots+e_n$, with any $k\in\Z$, we also have
    \begin{equation*}
            \begin{aligned}
            c_K\bigl(\mu^{(1)}(k)\bigr)&=\frac{2}{n+1}\left[nk^2-2k+n^2+n-1\right],\\
            c_K\bigl(\mu^{(2)}(k)\bigr)&=\frac{2}{n+1}\left[nk^2-2nk+2k+n^2+2n-3\right],\\
            c_K\bigl(-(k+1)e_1\bigr)&=\frac{2n}{n+1}(k+1)^2,\\
            c_K\bigl((2-k)e_1\bigr)&=\frac{2n}{n+1}(k-2)^2.
            \end{aligned}
    \end{equation*}
    In particular, the adjoint highest weight is $\mu_\fg=e_1-e_{n+1}=2e_1+e_2+\cdots+e_n$, and
    \begin{equation*}
        c_G(\mu_\fg)=4(n+1),\qquad
        c_K(\mu_\fq)=c_K\bigl(\mu^{(1)}(-1)\bigr)
        =c_K\bigl(\mu^{(2)}(2)\bigr)=2(n+1).
    \end{equation*}
    Thus $c_K(\mu_\fq)-c_G(\mu_\fg)=-2(n+1)$.
\end{lemma}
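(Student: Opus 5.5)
The whole lemma follows from the Freudenthal formula $c(\lambda)=\langle\lambda+2\rho,\lambda\rangle=\|\lambda+\rho\|^2-\|\rho\|^2$. The only delicate point is to fix the normalisation of the induced inner product on $\ft^\ast$ coming from $\bangle{X,Y}=-\frac12\tr(XY)$. The vector $\diag(it_j)$ has squared norm $\frac12\sum t_j^2$. Hence the dual form on $\ft^\ast$, written in the ambient coordinates $e_1,\dots,e_{n+1}$ of $\R^{n+1}$ with $\sum e_j=0$, is $2$ times the standard form restricted to the hyperplane $\sum x_j=0$. Concretely, to evaluate $\langle\alpha,\beta\rangle$ for $\alpha=\sum a_je_j$ and $\beta=\sum b_je_j$ (with $j\le n$), I will:
\begin{itemize}
\item project the coefficient vectors $(a_1,\dots,a_n,0)$ and $(b_1,\dots,b_n,0)$ to trace zero by subtracting their means;
\item take twice the Euclidean inner product of the projections.
\end{itemize}
As a check, $\mu_\fg=e_1-e_{n+1}$ has $\|\mu_\fg\|^2=2\cdot 2=4$ and $\langle\mu_\fg,2\rho_G\rangle=2\cdot 2n$, so $c_G(\mu_\fg)=4+4n=4(n+1)$, as claimed. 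This fixes the normalisation, and I will use the same form for $K$, since $\fk\subset\fg$ shares the torus $\ft$.

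For the $G$-values I will compute $c_G(\lambda)=\langle\lambda,\lambda\rangle+\langle\lambda,2\rho_G\rangle$ with $2\rho_G=\sum_j(n+2-2j)e_j$ (indices $1,\dots,n+1$). For $\lambda=\sum a_je_j$ with $\sum_{j\le n}a_j=A$, this gives
\[
\langle\lambda,\lambda\rangle=2\Bigl(\sum a_j^2-\tfrac{A^2}{n+1}\Bigr),\qquad \langle\lambda,2\rho_G\rangle=2\sum_{j\le n}a_j(n+2-2j);
\]
no mean correction is needed in the second term because $2\rho_G$ already has trace zero. I then substitute $\lambda^{(1)}(a,k)$, which has $a_1=2a-k-1$ and $a_2=\dots=a_n=a-k-1$, and $\lambda^{(2)}(a,k)$. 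The sums $\sum_{j=2}^n(n+2-2j)=-(n-1)$ and $\sum_{j=3}^n(n+2-2j)=-(n-1)-(n-2)=-(2n-3)$ do all the work. Expanding and collecting over the denominator $n+1$ should give the stated polynomials. As a sanity check, $\lambda^{(1)}(1,-1)=e_1$ (the standard representation) should give $c_G=\frac{2n(n+2)}{n+1}$, and the formula gives $\frac{2}{n+1}[4(n+1)+n(n+2)-n^2]$, so I will cross-check that specialisation against the direct value while doing the algebra.

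For the $K$-values, $\fk_\C=\C\oplus\fsl(n,\C)$ has positive roots $e_i-e_j$ with $2\le i<j\le n+1$, so $2\rho_K=\sum_{j=2}^{n+1}(n+3-2j)e_j$. This vector is orthogonal to the centre direction, and its $e_1$-coefficient is zero. I will use the same recipe with this $2\rho_K$, for instance
\[
c_K(te_1)=2t^2\Bigl(1-\tfrac1{n+1}\Bigr)+0=\tfrac{2n}{n+1}t^2,
\]
which yields the third and fourth formulas immediately. For $\mu^{(1)}(k)=ke_1+e_2$ and $\mu^{(2)}(k)=ke_1+e_2+\dots+e_n$, I expand the squared norm with $A=k+1$ and $A=k+n-1$ respectively. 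The $\rho_K$-pairings are $2(n+1)$ for $e_2$ and $2\sum_{j=2}^n(n+3-2j)=2(n-1)$ for $e_2+\dots+e_n$. Setting $k=-1$ and $k=2$ then gives $2(n+1)$ in both cases, which confirms $c_K(\mu_\fq)$ and hence the difference $c_K(\mu_\fq)-c_G(\mu_\fg)=-2(n+1)$.

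The main obstacle is purely bookkeeping: keeping the factor $2$ from the normalisation and the mean-subtraction term $A^2/(n+1)$ consistent throughout. I will verify each general formula against one explicit small case, such as $k=-1$ for $\mu^{(1)}$ (the isotropy weight) and the trivial or standard representation for the $G$-families, before trusting the general expansion.
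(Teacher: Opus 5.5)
Your framework is the same as the paper's. You use the form $\bangle{x,y}=2\sum_j x_jy_j-\frac{2}{n+1}\bigl(\sum_j x_j\bigr)\bigl(\sum_j y_j\bigr)$ on $\ft^\ast$, which is your ``twice the Euclidean product of the trace-free projections''. You take $2\rho_G$ and $2\rho_K$ exactly as the paper does, set $c=\bangle{\lambda,\lambda+2\rho}$, and substitute. Your normalisation check with $\mu_\fg$ and your computation of $c_K(te_1)$ are correct. However, the lemma is nothing but this arithmetic, and several of the explicit numbers you feed into it are wrong. As set up, your expansion would not return the stated polynomials.

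\textbf{The $\rho_G$ sums.} $\sum_{j=2}^{n}(n+2-2j)=0$, not $-(n-1)$: the sum over $j=1,\ldots,n+1$ vanishes and the two end terms are $n$ and $-n$. For $n=3$ the middle terms are just $1$ and $-1$. Hence $\sum_{j=3}^{n}(n+2-2j)=-(n-2)$, not $-(2n-3)$. The correct pairings are $\bangle{\lambda^{(1)}(a,k),2\rho_G}=2n(2a-k-1)$ and $\bangle{\lambda^{(2)}(a,k),2\rho_G}=2\bigl[n(2a-k)+n-2\bigr]$. With your sums, a spurious term $-2(n-1)(a-k-1)$, respectively $-2(n-1)(a-k)$, would survive.

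\textbf{The $\rho_K$ pairing.} $\bangle{e_2,2\rho_K}=2(n+3-4)=2(n-1)$, not $2(n+1)$; it coincides with the pairing of $e_2+\cdots+e_n$. With $2(n+1)$ you would get $c_K(\mu^{(1)}(-1))=2(n+3)$, contradicting your own claim that $k=-1$ gives $2(n+1)$.

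\textbf{The sanity check.} It is aimed at the wrong weight: $\lambda^{(1)}(1,-1)=2e_1+e_2+\cdots+e_n=\mu_\fg$, whereas $e_1=\lambda^{(1)}(1,0)$. The stated formula gives $\frac{2}{n+1}\bigl[2n(n+1)-n^2\bigr]=\frac{2n(n+2)}{n+1}$ at $(1,0)$ and $4(n+1)$ at $(1,-1)$, both correct. The expression you wrote, $\frac{2}{n+1}\bigl[4(n+1)+n(n+2)-n^2\bigr]=\frac{4(3n+2)}{n+1}$, matches neither.

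Once these inputs are corrected to $n(2a-k-1)$, $n(2a-k)+n-2$ and $n-1$, the values used in the paper's proof, expanding over the denominator $n+1$ reproduces every identity in the lemma.
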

\begin{proof}
    Since $e_{n+1}=-\sum_{j=1}^n e_j$, the half-sums of positive roots satisfy
    \begin{equation*}
        \begin{aligned}
            2\rho_G &= \sum_{j=1}^n(2n+2-2j)e_j,\\
            2\rho_K&= \sum_{j=2}^n(2n+2-2j)e_j+(n-1)e_1,
        \end{aligned}
    \end{equation*}
    and the inner product induced by $\bangle{X,Y}=-\frac12\tr(XY)$ is
    \begin{equation*}
        \bangle{\sum_{j=1}^n x_je_j,\sum_{j=1}^n y_je_j}
        =2\sum_{j=1}^n x_jy_j-\frac{2}{n+1}
        \left(\sum_{j=1}^n x_j\right)\left(\sum_{j=1}^n y_j\right).
    \end{equation*}
    Consequently, for a highest weight $\eta=\sum_{j=1}^n x_je_j$, writing $S=\sum_{j=1}^n x_j$, the identity $c_L(\eta)=\bangle{\eta,\eta+2\rho_L}$ gives
    \begin{equation*}
        \begin{aligned}
            c_G(\eta)&=2\left[\sum_{j=1}^n x_j^2-\frac{S^2}{n+1}
            +\sum_{j=1}^n(n+2-2j)x_j\right],\\
            c_K(\eta)&=2\left[\sum_{j=1}^n x_j^2-\frac{S^2}{n+1}
            +\sum_{j=2}^n(n+3-2j)x_j\right].
        \end{aligned}
    \end{equation*}
    For $\lambda^{(1)}(a,k)$, the coefficient vector is $(2a-k-1,a-k-1,\ldots,a-k-1)$, so
    \begin{equation*}
        S=(n+1)a-nk-n,\qquad
        \sum_{j=1}^n x_j^2=(2a-k-1)^2+(n-1)(a-k-1)^2,
    \end{equation*}
    while $\sum_{j=1}^n(n+2-2j)x_j=n(2a-k-1)$. Substitution into the formula for $c_G$ yields the first identity. For $\lambda^{(2)}(a,k)$, the coefficient vector is $(2a-k,a-k+1,a-k,\ldots,a-k)$, giving
    \begin{equation*}
        S=(n+1)a-nk+1,\qquad
        \sum_{j=1}^n x_j^2=(2a-k)^2+(a-k+1)^2+(n-2)(a-k)^2,
    \end{equation*}
    and $\sum_{j=1}^n(n+2-2j)x_j=n(2a-k)+n-2$. This yields the second identity.

    For the two $K$-weights $\mu^{(1)}(k)$ and $\mu^{(2)}(k)$, the triples
    $\bigl(S,\sum_{j=1}^n x_j^2,\sum_{j=2}^n(n+3-2j)x_j\bigr)$ are, respectively,
    \begin{equation*}
        (k+1,k^2+1,n-1)\quad\text{and}\quad
        (k+n-1,k^2+n-1,n-1).
    \end{equation*}
    These give the stated $K$-Casimir eigenvalues. Since $c_K(te_1)=\frac{2n}{n+1}t^2$ for every $t\in\Z$, the two character formulas follow as well. Finally, substituting $\mu_\fg=2e_1+e_2+\cdots+e_n$, $\mu^{(1)}(-1)$ and $\mu^{(2)}(2)$ gives the last assertions.
\end{proof}
\begin{proposition}\label{prop:Hopf-negative-contributions}
    Assume $n\geq3$ and $\epsilon>0$, and set $\delta=1$. For the
    representatives $\lambda^{(1)}(a,k)$ and $\lambda^{(2)}(a,k)$ in
    Lemma~\ref{lemma:Hopf-branching}, the eigenvalues of
    $\cJ_\phi$ are given by the following four families:
    \begin{equation}\label{eq:Hopf-all-families-ranges}
        \begin{aligned}
            C^{(1)}_{a,k}(\epsilon)&\coloneqq C_{\lambda^{(1)}(a,k)}\bigl(\mu^{(1)}(k),-(k+1)e_1\bigr), &&a\geq1,\quad k\leq a-1,\\
            C^{(2)}_{a,k}(\epsilon)&\coloneqq C_{\lambda^{(2)}(a,k)}\bigl(\mu^{(1)}(k),-(k+1)e_1\bigr), &&a\geq1,\quad k\leq a,\\
            D^{(1)}_{a,k}(\epsilon)&\coloneqq C_{\lambda^{(1)}(a,k)}\bigl(\mu^{(2)}(k+3),-(k+1)e_1\bigr), &&a\geq1,\quad k\leq a-2,\\
            D^{(2)}_{a,k}(\epsilon)&\coloneqq C_{\lambda^{(2)}(a,k)}\bigl(\mu^{(2)}(k-1),(3-k)e_1\bigr), &&n=3,\quad a\geq1,\quad k\leq a.
        \end{aligned}
    \end{equation}
    Each displayed contribution has multiplicity
    $d_{\lambda^{(j)}(a,k)}=\dim_\C\cH^G_{\lambda^{(j)}(a,k)}$ in the
    chosen complex summand. By
    Remark~\ref{rem:real-vs-complex-multiplicity}, the weights in
    Lemma~\ref{lemma:Hopf-branching} give the complete real spectrum, with
    every such multiplicity doubled.

    More explicitly,
    \begin{equation}\label{eq:Hopf-family-formulas}
        \begin{aligned}
            C^{(1)}_{a,k}(\epsilon)
            &=2\Big[n(a-2)+2(a-1)+(a-k-1)(2a+n-2)\Big]
            +\frac{2n}{n+1}(k+1)^2\epsilon^2,\\
            C^{(2)}_{a,k}(\epsilon)
            &=2\Big[n(a-1)-2+(a-k)(2a+n)\Big]
            +\frac{2n}{n+1}(k+1)^2\epsilon^2,\\
            D^{(1)}_{a,k}(\epsilon)
            &=2\Big[n(a-1)+(a-k-2)(2a+n+2)\Big]
            +\frac{2n}{n+1}(k+1)^2\epsilon^2.
        \end{aligned}
    \end{equation}
    If $n=3$, then
    \begin{equation}\label{eq:D2-formula}
        D^{(2)}_{a,k}(\epsilon)
        =2\Big[5a-7+(a-k)(2a+1)\Big]+\frac32(k-3)^2\epsilon^2.
    \end{equation}

    Consequently,
    \begin{equation*}
        \begin{aligned}
            C^{(1)}_{a,k}(\epsilon)<0
            &\quad\Longleftrightarrow\quad (a,k)=(1,0)
            \text{ and }\epsilon^2<n+1,\\
            C^{(2)}_{a,k}(\epsilon)<0
            &\quad\Longleftrightarrow\quad (a,k)=(1,1)
            \text{ and }\epsilon^2<\frac{n+1}{2n},\\
            D^{(2)}_{a,k}(\epsilon)<0\quad(n=3)
            &\quad\Longleftrightarrow\quad (a,k)=(1,1)
            \text{ and }\epsilon^2<\frac23.
        \end{aligned}
    \end{equation*}
    All $D^{(1)}_{a,k}(\epsilon)$ are non-negative. Moreover,
    \begin{equation}\label{eq:permanent-zero}
        C^{(1)}_{1,-1}(\epsilon)=D^{(1)}_{1,-1}(\epsilon)=0
        \qquad\text{for every }\epsilon>0.
    \end{equation}
    Apart from~\eqref{eq:permanent-zero}, zero occurs only when
    \begin{equation*}
        C^{(1)}_{1,0}(\epsilon)=0\iff\epsilon^2=n+1
    \end{equation*}
    or
    \begin{equation*}
        C^{(2)}_{1,1}(\epsilon)=0\iff\epsilon^2=\frac{n+1}{2n}.
    \end{equation*}
    If $n=3$ and $\epsilon^2=\frac{n+1}{2n}=\frac23$, then one also has
    $D^{(2)}_{1,1}(\epsilon)=0$.
\end{proposition}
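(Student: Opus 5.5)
The plan is to combine Theorem~\ref{thm:J-eigenvalues} with Lemmas~\ref{lemma:Hopf-branching} and~\ref{lemma:Casimir-sun+1}, and then carry out an elementary sign analysis of the resulting quadratic expressions in $(a,k)$.

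\emph{Identifying the four families.} Lemma~\ref{lemma:Hopf-branching} lists, for each $\lambda=\lambda^{(j)}(a,k)$, the $K$-components $\lambda_i$ of $\pi^G_\lambda|_K$ whose tensor products with $\pi_\fq$ or $\pi_\fq^\ast$ can have nonzero $H$-trivial part. It also lists the only summands $\lambda_{ij}$ with nonzero $H$-trivial part, namely $-(k'+1)e_1$ from $(\pi^K_{\mu^{(1)}(k')})^\ast\otimes\pi_\fq$ and $(2-k')e_1$ from $(\pi^K_{\mu^{(2)}(k')})^\ast\otimes\pi_\fq^\ast$. Matching these gives:
\begin{itemize}
\item[$\circ$] $\lambda_i=\mu^{(1)}(k)$ yields the $C$-families.
\item[$\circ$] For $\lambda^{(1)}$, $\mu^{(2)}(k+3)$ gives $(2-(k+3))e_1=-(k+1)e_1$, which is $D^{(1)}$.
\item[$\circ$] For $n=3$, $\mu^{(2)}(k-1)$ gives $(3-k)e_1$, which is $D^{(2)}$.
\end{itemize}
The $H$-trivial parts are one-dimensional, so each contribution has multiplicity $d_\lambda$. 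The doubling follows from Remark~\ref{rem:real-vs-complex-multiplicity}, since $\Ad_\fq$ is of complex type.

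\emph{Explicit formulas.} I substitute Lemma~\ref{lemma:Casimir-sun+1} into Definition~\ref{def:C-lambda} with $\delta=1$:
\[
C=c_G(\lambda)-c_K(\lambda_i)-2(n+1)+\epsilon^2c_K(\lambda_{ij}).
\]
For $C^{(1)}$, the quantity $c_G(\lambda^{(1)})-c_K(\mu^{(1)}(k))$ has the common factor $\tfrac{2}{n+1}$. The terms $nk^2$ cancel, and the remainder should be divisible by $n+1$. I will expand it and check that it equals $2[\dots]+2(n+1)$, matching \eqref{eq:Hopf-family-formulas}. The same routine expansion handles $C^{(2)}$, $D^{(1)}$ and $D^{(2)}$; for $D^{(2)}$ I set $n=3$ and use $c_K((3-k)e_1)=\tfrac32(k-3)^2$. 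These are straightforward algebraic verifications.

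\emph{Sign analysis.} This is the step needing care, because $k$ ranges over all integers $\le a-1$ (or $\le a$), including arbitrarily negative values. For each family, write $m=a-k-1\ge0$ (respectively $a-k\ge0$ or $a-k-2\ge0$) so that the bracket is a sum of terms with visible signs.
\begin{itemize}
\item[$\circ$] $C^{(1)}$: the bracket is $n(a-2)+2(a-1)+m(2a+n-2)$. For $a\ge2$ it is positive. For $a=1$ it equals $-n+m n$. With $m=0$ (that is, $k=0$) the eigenvalue is $-2n+\tfrac{2n}{n+1}\epsilon^2$, which is negative exactly when $\epsilon^2<n+1$. With $m=1$ (that is, $k=-1$) the $\epsilon$-term vanishes and the eigenvalue is $0$. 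With $m\ge2$ it is positive.
\item[$\circ$] $C^{(2)}$: the bracket is $n(a-1)-2+m(2a+n)$, which is negative only when $a=1$ and $m=0$, i.e.\ $k=1$. This gives $-4+\tfrac{8n}{n+1}\epsilon^2$.
\item[$\circ$] $D^{(1)}$: the bracket is $n(a-1)+m(2a+n+2)\ge0$, with equality only at $a=1$, $m=0$, i.e.\ $k=-1$, where the $\epsilon$-term also vanishes. This gives the permanent zero.
\item[$\circ$] $D^{(2)}$: the bracket is $5a-7+m(2a+1)$. This is $-2$ only at $a=1$, $m=0$, i.e.\ $k=1$, which gives $-4+6\epsilon^2$. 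The case $a=1$, $m=1$ gives $1>0$.
\end{itemize}
Wherever the bracket is positive, the eigenvalue is positive because the $\epsilon^2$-term is nonnegative. Hence zeros occur only at the listed points. The main obstacle is getting the algebra of the formulas exactly right; once it is correct, the sign discussion is immediate.
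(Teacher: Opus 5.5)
Your proposal follows the paper's proof essentially step for step. You use the same matching of $K$-components with the scalar summands $-(k+1)e_1$ and $(2-\ell)e_1$ from Lemma~\ref{lemma:Hopf-branching}, the same substitution of Lemma~\ref{lemma:Casimir-sun+1} into $C_\lambda$, and the same split $a\geq2$ versus $a=1$ in the sign analysis; your variable $m$ is only bookkeeping. The formulas and sign conclusions you aim for all check out.

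One caveat, which you share with the paper, concerns multiplicities. The $D$-families come from the conjugate summand $\pi_\fq^\ast$, not from the chosen one. They are the complex conjugates of $C$-contributions for the dual weight: $D^{(1)}_{a,k}=C^{(1)}_{a-k-1,-k-2}$ and, for $n=3$, $D^{(2)}_{a,k}=C^{(2)}_{a-k+1,2-k}$. Doubling all four families therefore double-counts. The real spectrum is the $C$-families alone with doubled multiplicities, which is what Theorem~\ref{thm:Hopf-negative-multiplicities} effectively uses: the adjoint contributes $2n(n+2)$ to the nullity, not $4n(n+2)$.
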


\begin{proof}
    Theorem~\ref{thm:J-eigenvalues}, together with
    Lemma~\ref{lemma:Hopf-branching}, gives the four contributions in
    \eqref{eq:Hopf-all-families-ranges}. A $K$-component
    $\pi^K_{\mu^{(1)}(k)}$ contributes through the scalar summand
    $-(k+1)e_1$ in $(\pi^K_{\mu^{(1)}(k)})^\ast\otimes\pi_\fq$, while
    $\pi^K_{\mu^{(2)}(\ell)}$ contributes through $(2-\ell)e_1$ in
    $(\pi^K_{\mu^{(2)}(\ell)})^\ast\otimes\pi_\fq^\ast$; taking
    $\ell=k+3$ and $\ell=k-1$ gives the last two families. The relevant
    $H$-fixed spaces are one-dimensional, so each contribution has complex
    multiplicity $d_{\lambda^{(j)}(a,k)}$, and
    Remark~\ref{rem:real-vs-complex-multiplicity} gives the full real
    spectrum by doubling these multiplicities. Substitution of the
    Casimir values from Lemma~\ref{lemma:Casimir-sun+1} yields
    \eqref{eq:Hopf-family-formulas} and~\eqref{eq:D2-formula}.

    For $C^{(1)}_{a,k}$, the condition $a-k-1\geq0$ and
    $n(a-2)+2(a-1)>0$ show positivity when $a\geq2$. If $a=1$, then
    $k\leq0$ and $C^{(1)}_{1,k}(\epsilon)=-2n(k+1)+\frac{2n}{n+1}(k+1)^2\epsilon^2$;
    this is negative only for $k=0$ and $\epsilon^2<n+1$, vanishes
    identically for $k=-1$, and is positive for $k\leq-2$. Similarly,
    $C^{(2)}_{a,k}>0$ for $a\geq2$, since $n(a-1)-2\geq n-2>0$; for
    $a=1$, its constant term is $2[-2+(1-k)(n+2)]$, which is positive for
    $k\leq0$, while $k=1$ gives
    $C^{(2)}_{1,1}(\epsilon)=\frac{8n}{n+1}\epsilon^2-4$, negative exactly
    when $\epsilon^2<(n+1)/(2n)$.

    Finally, all terms of $D^{(1)}_{a,k}$ are non-negative because
    $k\leq a-2$, and equality holds only for $(a,k)=(1,-1)$, proving
    \eqref{eq:permanent-zero}. If $n=3$, the constant term of
    $D^{(2)}_{a,k}$ is positive for $a\geq2$, and for $a=1$ it equals
    $2(1-3k)$, which is positive for $k\leq0$; the remaining case gives
    $D^{(2)}_{1,1}(\epsilon)=6\epsilon^2-4=C^{(2)}_{1,1}(\epsilon)$.
    The stated zero cases follow from the same formulas.
\end{proof}
\begin{theorem}\label{thm:Hopf-negative-multiplicities}
    Let $n\geq3$ and $\epsilon>0$. For the complex Hopf fibration
    \[
        \phi:(\bbS^{2n+1},g_\epsilon)\longrightarrow(\C P^n,h),
    \]
    the only eigenvalues of $\cJ_\phi$ which can be negative are
    $\frac{2n}{n+1}\epsilon^2-2n$ and
    $\frac{8n}{n+1}\epsilon^2-4$, with real multiplicities $2(n+1)$ and
    $n(n+1)$, respectively. The Morse index and nullity of $\phi$ are
    \begin{align}
        \mathrm{Ind}(\phi)&=
        \begin{cases}
            (n+1)(n+2),&\epsilon^2\in\left(0,\dfrac{n+1}{2n}\right),\\[1ex]
            2(n+1),&\epsilon^2\in\left[\dfrac{n+1}{2n},n+1\right),\\[1ex]
            0,&\epsilon^2\in[n+1,\infty).
        \end{cases}\label{eq:Hopf-Morse-index}\\
        \mathrm{Null}(\phi)&=
        \begin{cases}
            n(3n+5),&\epsilon^2=\dfrac{n+1}{2n},\\[1ex]
            2(n^2+3n+1),&\epsilon^2=n+1,\\[1ex]
            2n(n+2),&\text{otherwise}.
        \end{cases}\label{eq:Hopf-nullity}
    \end{align}

    In particular, for the unit round metric
    $\epsilon^2=(n+1)/(2n)$, the only negative eigenvalue is $1-2n$, with
    real multiplicity $2n+2$, and
    \begin{equation}
        \mathrm{Ind}(\phi)=2n+2,
        \qquad
        \mathrm{Null}(\phi)=n(3n+5).
    \end{equation}
\end{theorem}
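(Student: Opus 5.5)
The plan is to read everything off Proposition~\ref{prop:Hopf-negative-contributions}; what remains is (i) identifying the finitely many relevant highest weights and computing their dimensions, and (ii) converting complex multiplicities into real ones without double counting.

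\emph{Step 1: the relevant weights.} By Proposition~\ref{prop:Hopf-negative-contributions}, the only eigenvalues that can be negative or zero come from the following parameters, with $n\geq3$:
\[
C^{(1)}_{1,0},\quad C^{(2)}_{1,1},\quad C^{(1)}_{1,-1},\quad D^{(1)}_{1,-1},\quad\text{and, if }n=3,\ D^{(2)}_{1,1}.
\]
Substituting into~\eqref{eq:possible-lambdas} gives the underlying $G$-weights:
\[
\lambda^{(1)}(1,0)=e_1,\qquad
\lambda^{(2)}(1,1)=e_1+e_2,\qquad
\lambda^{(1)}(1,-1)=2e_1+e_2+\cdots+e_n=\mu_\fg .
\]
These are the standard representation $\C^{n+1}$, the exterior square $\Lambda^2\C^{n+1}$, and $\fsl(n+1,\C)$. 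Their complex dimensions are $n+1$, $n(n+1)/2$ and $n(n+2)$. These can be checked with the Weyl dimension formula, or directly from the descriptions. The explicit values
\[
C^{(1)}_{1,0}=\frac{2n}{n+1}\epsilon^2-2n,
\qquad
C^{(2)}_{1,1}=\frac{8n}{n+1}\epsilon^2-4
\]
come from~\eqref{eq:Hopf-family-formulas}. At $\epsilon^2=(n+1)/(2n)$ the first equals $1-2n$ and the second vanishes.

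\emph{Step 2: counting real multiplicities.} The complexified space of real sections is $C^\infty_H(G,\fq_\C)$. The real dimension of a $\cJ_\phi$-eigenspace therefore equals the complex dimension of the corresponding eigenspace in $\bigoplus_\lambda \cH^G_\lambda\otimes\Hom_H(\cH^G_\lambda,\fq_\C)$. Here $\lambda$ ranges over all weights, not only those in the list, so the duals $\lambda^\ast$ must also be included. This is the mechanism behind Remark~\ref{rem:real-vs-complex-multiplicity}.

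For $\lambda=e_1$, the dual $\lambda^\ast=e_1+\cdots+e_n$ is not in the list~\eqref{eq:possible-lambdas}. The eigenvalue $C^{(1)}_{1,0}$ therefore has real multiplicity $2(n+1)$.

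For $\lambda=e_1+e_2$ with $n\geq4$, the dual is again not in the list, which gives $2\cdot n(n+1)/2=n(n+1)$. For $n=3$, $\Lambda^2\C^4$ is self-dual, and the extra family $D^{(2)}_{1,1}=C^{(2)}_{1,1}$ is exactly the contribution of the dual. I will not double in that case, and the count is again $2\cdot 6=n(n+1)$.

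The adjoint weight is self-dual. Its two $H$-fixed lines, one for $C^{(1)}_{1,-1}$ and one for $D^{(1)}_{1,-1}$, already exhaust both summands $\cV_\fq\oplus\overline{\cV_\fq}$. Thus the permanent kernel~\eqref{eq:permanent-zero} has real dimension $2\,n(n+2)$, not $4n(n+2)$.

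The main obstacle is exactly this bookkeeping. The naive prescription ``double every multiplicity'' in Remark~\ref{rem:real-vs-complex-multiplicity} overcounts for self-dual weights. I will handle it by checking, for each weight above, whether $\lambda^\ast$ also appears in~\eqref{eq:possible-lambdas}.

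\emph{Step 3: assembling the index and nullity.} For the index: when $\epsilon^2<(n+1)/(2n)$ both eigenvalues are negative, giving $2(n+1)+n(n+1)=(n+1)(n+2)$. When $(n+1)/(2n)\leq\epsilon^2<n+1$ only the first is negative, giving $2(n+1)$. When $\epsilon^2\geq n+1$ neither is negative, so the index is $0$.

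For the nullity, the permanent part $2n(n+2)$ is always present. At $\epsilon^2=(n+1)/(2n)$ the eigenvalue $C^{(2)}_{1,1}$ adds $n(n+1)$, giving $n(3n+5)$. At $\epsilon^2=n+1$ the eigenvalue $C^{(1)}_{1,0}$ adds $2(n+1)$, giving $2(n^2+3n+1)$.

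Finally, the round-metric statement is the specialisation $\epsilon^2=(n+1)/(2n)$, which lies in the middle index range: the index is $2n+2$ and the only negative eigenvalue is $1-2n$.
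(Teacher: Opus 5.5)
Your proposal is correct and follows the paper's own argument. You read off from Proposition~\ref{prop:Hopf-negative-contributions} the weights $e_1$, $e_1+e_2$ and $\mu_\fg$, compute their dimensions $n+1$, $\binom{n+1}{2}$ and $n(n+2)$, and pass to real multiplicities via complex conjugation. You also make explicit a point the paper's proof only uses implicitly when it assigns $2n(n+2)$ to the permanent kernel and $n(n+1)$ to $C^{(2)}_{1,1}$: $D^{(1)}_{1,-1}$ (and, for $n=3$, $D^{(2)}_{1,1}$) is the conjugate partner of $C^{(1)}_{1,-1}$ (resp.\ $C^{(2)}_{1,1}$) at a self-dual weight, so it must not be doubled again.
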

\begin{proof}
    By Lemma~\ref{lemma:Hopf-branching} and
    Remark~\ref{rem:real-vs-complex-multiplicity}, the weights listed in
    the lemma give all the eigenvalues in
    \eqref{eq:Hopf-all-families-ranges}, with doubled multiplicities.
    Proposition~\ref{prop:Hopf-negative-contributions} shows that the only
    possibly negative ones are
    $C^{(1)}_{1,0}(\epsilon)=\frac{2n}{n+1}\epsilon^2-2n$ and
    $C^{(2)}_{1,1}(\epsilon)=\frac{8n}{n+1}\epsilon^2-4$.
    They correspond, respectively, to
    $\lambda^{(1)}(1,0)=e_1$ and
    $\lambda^{(2)}(1,1)=e_1+e_2$, whose representation dimensions are
    $n+1$ and $\binom{n+1}{2}$. Hence
    their real multiplicities are $2(n+1)$ and
    $2\binom{n+1}{2}=n(n+1)$; moreover, the first is negative precisely
    when $\epsilon^2<n+1$, while the second is negative precisely when
    $\epsilon^2<(n+1)/(2n)$.

    The permanent zero eigenvalue corresponds to
    $\lambda^{(1)}(1,-1)=\mu_\fg$, the adjoint representation, of
    dimension $n(n+2)$, and therefore contributes $2n(n+2)$ to the
    nullity. At $\epsilon^2=(n+1)/(2n)$ one must add $n(n+1)$, obtaining
    $n(3n+5)$, whereas at $\epsilon^2=n+1$ one adds $2(n+1)$, obtaining
    $2(n^2+3n+1)$. Proposition~\ref{prop:Hopf-negative-contributions}
    excludes any further zero eigenvalues, proving
    \eqref{eq:Hopf-nullity}.

    Finally, both eigenvalues are negative when
    $0<\epsilon^2<(n+1)/(2n)$, giving index
    $2(n+1)+n(n+1)=(n+1)(n+2)$; only the first eigenvalue is negative when
    $(n+1)/(2n)\leq\epsilon^2<n+1$, giving index $2(n+1)$; and neither is
    negative when $\epsilon^2\geq n+1$. This proves
    \eqref{eq:Hopf-Morse-index}. For the round metric
    $\epsilon^2=(n+1)/(2n)$, these eigenvalues are $1-2n$ and $0$,
    respectively, which gives the final assertion.
\end{proof}
\section{Harmonic maps with low Morse index}
\subsection{The case of \texorpdfstring{$\mathbb{S}^3\to\mathbb{S}^2$}{} revisited}
We use the standard Euclidean models
\begin{equation*}
    \bbS^3=\left\{z=(z_1,z_2)\in\C^2\;:\;|z_1|^2+|z_2|^2=1\right\}
\end{equation*}
and
\begin{equation*}
    \bbS^2=\left\{u=(w,t)\in\C\times\R\;:\;|w|^2+t^2=1\right\}.
\end{equation*}
The ambient spaces are endowed with their real Euclidean inner products. If $u=(w,t)\in\bbS^2$ and $v=(\eta,s)\in T_u\bbS^2$, the complex structure compatible with the standard orientation is
\begin{equation}\label{eq:complex-structure-S2}
    J_uv=u\times v
    =\bigl(i(t\eta-sw),\operatorname{Im}(\bar w\eta)\bigr),
\end{equation}
where we identify $\C\times\R$ with $\R^3$. In particular, $J$ is parallel and orthogonal.

For each $a\in\C^2\cong\R^4$, let $X_a\in\fX(\bbS^3)$ be the associated conformal Killing vector field, defined by
\begin{equation}\label{eq:conformal-fields-C2}
    X_a(z)=a-\bangle{a,z}z.
\end{equation}
Fix the standard real orthonormal basis $\{a_\alpha\}_{\alpha=1}^4$ of $\C^2$, given by
\begin{equation*}
    a_1=(1,0),\qquad a_2=(i,0),\qquad
    a_3=(0,1),\qquad a_4=(0,i).
\end{equation*}
The corresponding vector fields satisfy $\nabla_YX_{a_\alpha}=-\bangle{a_\alpha,z}Y$ and the completeness relation
\begin{equation}\label{eq:conformal-fields-completeness}
    \sum_{\alpha=1}^4\bangle{Y,X_{a_\alpha}}X_{a_\alpha}=Y
    \qforq Y\in T_z\bbS^3.
\end{equation}
The Hopf fibration in these coordinates is
\begin{equation}\label{eq:Hopf-map-C2}
    \pi(z_1,z_2)
    =\bigl(2z_1\bar z_2,|z_1|^2-|z_2|^2\bigr)
    \in\bbS^2\subset\C\times\R.
\end{equation}

We now state and prove the rigidity result of Rivière in a form adapted to the notation of this paper. This rewriting is the key idea for proving our main result in the next section. 

\begin{theorem}[Rivière~\cite{Riviere2023-co}]\label{thm:Riviere-rigidity}
    Let $\phi:\bbS^3\to\bbS^2$ be a non-constant smooth harmonic map. If $\mathrm{Ind}(\phi)\leq4$, then $\mathrm{Ind}(\phi)=4$ and $\phi$ is a harmonic morphism. More precisely, there exist an isometry $f\in\Aut(\bbS^3)$ and a holomorphic map $P:\C P^1\to\C P^1$ such that
    \begin{equation}
        \phi=P\circ\pi\circ f,
    \end{equation}
    where $\pi$ is the Hopf fibration defined in~\eqref{eq:Hopf-map-C2}.
\end{theorem}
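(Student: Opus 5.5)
The plan is to run El Soufi's conformal-field argument, but on the eight sections $dφ(X_a)$ and $J\,dφ(X_a)$ rather than only the first four. An equality analysis should then force the fibre structure.

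\textbf{Step 1: test sections and $\mathrm{Ind}(\phi)\geq4$.} For $a\in\C^2\cong\R^4$ and $b\in\R^4$ I set
\[
V_a=d\phi(X_a),\qquad W_b=J\,d\phi(X_b),
\]
and write $Q(V)=\int_{\bbS^3}h(\cJ_\phi V,V)$. Using $\nabla_YX_a=-\bangle{a,z}Y$, harmonicity of $\phi$ and the completeness relation~\eqref{eq:conformal-fields-completeness}, the classical computation gives
\[
\sum_\alpha Q(V_{a_\alpha})=(2-3)\int|d\phi|^2<0 .
\]
More precisely, $Q(V_a)=-\int\bangle{a,z}^2|d\phi|^2+(\text{terms vanishing after trace})$. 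I will first prove that $a\mapsto V_a$ is injective for non-constant $\phi$: if $d\phi(X_a)\equiv0$, then $\phi$ is constant along the flow of a conformal field with a source and a sink, which forces $\phi$ constant. I will then show that $Q$ is negative definite on $\{V_a\}$, which gives $\mathrm{Ind}(\phi)\geq4$.

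\textbf{Step 2: the $J$-twisted sections.} Since $J$ is parallel and orthogonal, and $R^h$ commutes with $J$ on $\bbS^2$, the form $Q(W_b)$ can be computed in the same way. Extra terms appear of the form
\[
\int\bigl(|d\phi|^2\,|X_b|^2-|d\phi^\ast(J d\phi(X_b))|\ \text{type expressions}\bigr).
\]
After summing over $b$, these reduce to the pointwise quantity
\[
|d\phi|^4-2\,|\phi^\ast\omega|^2\geq0
\]
(up to normalising constants). This quantity vanishes exactly when $d\phi_x\circ d\phi_x^\ast$ is a multiple of the identity on $T\bbS^2$, i.e. when it commutes with $J$, i.e. when $\phi$ is horizontally conformal.

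\textbf{Step 3: index bound, equality, and the fibre structure.} Next I consider the linear map $L:\R^8\to\Gamma(E)$, $(a,b)\mapsto V_a+W_b$, and the symmetric form $Q\circ L$ on $\R^8$. If $\mathrm{Ind}(\phi)\leq4$, then $Q\circ L$ is non-negative on a subspace of dimension at least $4$. The key step is to show that the trace of $Q\circ L$ over a suitably paired family (pairing $V_a$ with $W_{\sigma(a)}$ for a rotation $\sigma$) equals a negative multiple of $\int|d\phi|^2$ plus the defect integral of Step 2. Then:
\begin{itemize}
\item non-negativity on a $4$-dimensional subspace, combined with negative definiteness on the $V$-part, forces the defect to vanish pointwise, and forces $\mathrm{Ind}=4$;
\item consequently the negative eigenspace is $J$-invariant, and $J$ induces an orthogonal complex structure $\mathcal I$ on $\R^4$ with $J\,d\phi(X_a)=d\phi(X_{\mathcal I a})$.
\end{itemize}
Conjugating $\mathcal I$ to multiplication by $i$ by an isometry $f$ (possibly orientation reversing), I get $d\phi(X_{ia})=J\,d\phi(X_a)$. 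Evaluating at $z$ with $a=z$ gives
\[
d\phi(iz)=J\,d\phi(X_z)=0,
\]
since $X_z(z)=0$. Hence $d\phi(\xi)=0$ for the Hopf field $\xi$, and $\phi$ factors as $\phi=P\circ\pi\circ f$. The same identity shows that $dP$ commutes with $J$, so $P$ is holomorphic.

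I expect the main obstacle to be Step 3. The difficulty is extracting a pointwise equality from a global index bound: one has to choose the pairing of the eight test sections so that the trace of $Q$ over any $4$-dimensional complement reproduces exactly the Step 2 defect. My first attempt there is to average over all orthogonal complex structures on $\R^4$ compatible with the orientation.
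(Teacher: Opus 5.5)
\textbf{The gap is in Step~3, and it originates in Step~1.} In Step~1 you only record the traced identity $\sum_\alpha Q(V_{a_\alpha})=-\int|d\phi|^2$. The classical computation, using $\nabla_YX_a=-\langle a,z\rangle Y$, $\mathrm{Ric}_{\bbS^3}=2$ and $\tau(\phi)=0$, actually gives the pointwise eigen-identity $\cJ_\phi V_a=-V_a$. You need that identity, and without it Step~3 cannot close.

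Without it, the cross terms $Q(V_a,W_b)$ are uncontrolled. Moreover, $\mathrm{Ind}(\phi)\le4$ only gives $Q\circ L\ge0$ on \emph{some} $4$-dimensional subspace of $\R^8$ that you do not control. It says nothing about a paired family $\{V_a+W_{\sigma(a)}\}$ chosen in advance. So no choice of rotation $\sigma$, and no averaging over complex structures, gives a sign for that trace.

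Once you have the eigen-identity, self-adjointness gives
\[
Q(V_a+W_b)=Q(W_b)+\|W_b\|^2-\|V_a+W_b\|^2 .
\]
So adding $V$'s only makes the form more negative, and the relevant inequality is the one for $W_b$ alone. The missing step is therefore the direct one. Since $\{V_a\}$ is a $4$-dimensional eigenspace for $-1$, the hypothesis forces it to be the whole negative eigenspace. Hence $\lambda_1(\cJ_\phi)=-1$ and $Q(W)\ge-\|W\|^2$ for \emph{every} section $W$. Apply this to the four sections $W_{a_\alpha}$ and compute the trace pointwise in a basis of $\R^4$ adapted to $z$, with $|\phi^\ast\omega|^2=\sum_{i,j}h(\phi_i,J\phi_j)^2$. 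This gives
\[
0\le\sum_{\alpha=1}^4\bigl(Q(W_{a_\alpha})+\|W_{a_\alpha}\|^2\bigr)=-\int_{\bbS^3}\bigl(|d\phi|^4-2|\phi^\ast\omega|^2\bigr)=-\int_{\bbS^3}(\sigma_1^2-\sigma_2^2)^2 ,
\]
which forces horizontal weak conformality. The sign of the defect is exactly what decides the argument, and your Step~2 leaves it open (``up to normalising constants''). The eight-dimensional form is unnecessary.

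\textbf{The factorisation.} The paper stops once $\phi$ is horizontally weakly conformal, hence a harmonic morphism, and invokes the Baird--Wood classification of harmonic morphisms $\bbS^3\to\bbS^2$. Your endgame is different: $JV_a=V_{\mathcal I a}$, conjugation of $\mathcal I$ to $i$, and then $d\phi_z(iz)=Jd\phi_z(X_z(z))=0$. This is the argument the paper uses for $\bbS^{2n+1}\to\C P^n$. It would make the proof self-contained, but two steps are only asserted.

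First, $J$-invariance of the negative eigenspace needs $\cJ_\phi\circ J=J\circ\cJ_\phi$. On the round $\bbS^2$ one computes $\cJ_\phi J-J\cJ_\phi=[d\phi\,d\phi^\ast,J]$, which vanishes by conformality. It also needs that eigenspace to be exactly $\{V_a\}$, which again uses $\cJ_\phi V_a=-V_a$.

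Second, $\mathcal I$ is a priori only a linear map with $\mathcal I^2=-1$, and conjugating it by an isometry requires it to be orthogonal. This can be proved as follows.
\begin{itemize}
\item Set $L_x(a)=d\phi_x(X_a(x))$. Then $JL_x=L_x\mathcal I$ and $L_xL_x^\ast=\sigma^2\,\mathrm{Id}$.
\item Where $\sigma\neq0$, the map $L_x^\ast JL_x=\sigma^2P_x\mathcal I$ is skew. Here $P_x$ is the orthogonal projection onto $\{x,\mathcal Ix\}^\perp$.
\item Hence the symmetric part of $\mathcal I$ vanishes on these planes. They sweep out an open subset of $\R^4$, so $\mathcal I$ is skew, and therefore orthogonal.
\end{itemize}
You need to supply both of these points.
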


\begin{proof}
    Let $E=\phi^{-1}(T\bbS^2)$ and denote by $\nabla^\phi$ its induced connection. For a section $v\in\Gamma(E)$, the Jacobi operator is
    \begin{equation}\label{eq:Jacobi-S3-S2}
        \cJ_\phi v
        =(\nabla^\phi)^\ast\nabla^\phi v
        -\sum_{i=1}^3R^h(v,\phi_i)\phi_i,
        \qquad \phi_i=d\phi(e_i),
    \end{equation}
    where $\{e_i\}_{i=1}^3$ is a local orthonormal frame on $\bbS^3$. The standard variational identity associated with conformal transformations of the domain gives
    \begin{equation}\label{eq:conformal-negative-directions}
        \cJ_\phi(v_a)=-v_a,
        \qquad v_a\coloneqq d\phi(X_a),
        \qquad a\in\C^2.
    \end{equation}
    In particular,
    \begin{equation*}
        \cJ_\phi(v_{a_\alpha})=-v_{a_\alpha},
        \qquad \alpha=1,\ldots,4.
    \end{equation*}
    For a non-constant smooth harmonic map, these four eigensections are linearly independent. Indeed, if $v_a=0$ for some $a\neq0$, then, after composing with an isometry of $\bbS^3$, $\phi$ is constant along the conformal flow generated by $X_{a_1}$. Letting this flow converge to its attracting fixed point shows that $\phi$ is constant, a contradiction. Thus $\mathrm{Ind}(\phi)\geq4$. Under the hypothesis of the theorem, it follows that $\mathrm{Ind}(\phi)=4$ and $-1$ is the least eigenvalue of $\cJ_\phi$.

    Since $J$ is parallel, equations~\eqref{eq:Jacobi-S3-S2} and~\eqref{eq:conformal-negative-directions} imply
    \begin{equation}\label{eq:Jacobi-on-JV-S2}
        \cJ_\phi(Jv_a)
        =-Jv_a+
        \sum_{i=1}^3\left\{
        R^h(v_a,\phi_i)J\phi_i
        -R^h(Jv_a,\phi_i)\phi_i
        \right\}.
    \end{equation}
    The curvature tensor of the unit sphere satisfies
    \begin{equation*}
        R^h(U,V)Z=h(V,Z)U-h(U,Z)V.
    \end{equation*}
    Consequently,
    \begin{equation*}
        \begin{aligned}
        h\bigl(R^h(v_a,\phi_i)J\phi_i,Jv_a\bigr)
        &=h(v_a,J\phi_i)^2,\\
        h\bigl(R^h(Jv_a,\phi_i)\phi_i,Jv_a\bigr)
        &=|\phi_i|^2|v_a|^2-h(v_a,J\phi_i)^2.
        \end{aligned}
    \end{equation*}
    Pairing~\eqref{eq:Jacobi-on-JV-S2} with $Jv_a$ gives, pointwise,
    \begin{equation*}
        h(\cJ_\phi(Jv_a),Jv_a)
        =-|v_a|^2-|d\phi|^2|v_a|^2
        +2\sum_{i=1}^3h(v_a,J\phi_i)^2.
    \end{equation*}
    To take the trace in the parameter $a$, fix $z\in\bbS^3$ and choose
    an orthonormal basis $\{e_1,e_2,e_3\}$ of $T_z\bbS^3$. Completing it
    with the unit normal $z$ gives an orthonormal basis
    $\{e_1,e_2,e_3,z\}$ of $\R^4$. The left-hand side, regarded as a
    quadratic form in $a\in\R^4$, has the same trace in this adapted basis
    as in the fixed basis $\{a_\alpha\}_{\alpha=1}^4$. Here, for the fixed
    point $z$, the vectors $e_i$ are regarded as constant elements of
    $\R^4$ defining the global fields $X_{e_i}$; no differentiation of a
    moving frame is involved. Since
    \[
        X_{e_j}(z)=e_j,\qquad X_z(z)=0,
    \]
    we have $v_{e_j}(z)=\phi_j$ and $v_z(z)=0$. Therefore, tracing the
    preceding identity gives
    \begin{equation*}
        \begin{aligned}
        \sum_{\alpha=1}^4h(\cJ_\phi(Jv_{a_\alpha}),Jv_{a_\alpha})
        &=-\sum_{j=1}^3|\phi_j|^2
        -\sum_{i,j=1}^3|\phi_i|^2|\phi_j|^2
        +2\sum_{i,j=1}^3h(\phi_j,J\phi_i)^2\\
        &=-|d\phi|^2-|d\phi|^4
        +2\left|d\phi^\ast\circ J\circ d\phi\right|^2.
        \end{aligned}
    \end{equation*}
    Thus
    \begin{equation}\label{eq:Riviere-pointwise-identity}
        \begin{aligned}
        \sum_{\alpha=1}^4h(\cJ_\phi(Jv_{a_\alpha}),Jv_{a_\alpha})
        &=-|d\phi|^2-|d\phi|^4
        +2\left|d\phi^\ast\circ J\circ d\phi\right|^2\\
        &=-|d\phi|^2+|d\phi|^4-2|\phi^\ast h|^2.
        \end{aligned}
    \end{equation}
    Here the pullback metric $\phi^\ast h$ is identified with the self-adjoint endomorphism $d\phi^\ast\circ d\phi$, and the second equality follows from the fact that the target has real dimension two.

    Since $\lambda_1(\cJ_\phi)=-1$, the Rayleigh inequality gives
    \begin{equation*}
        \int_{\bbS^3}\left(h(\cJ_\phi v,v)+|v|^2\right)\,\mathrm{d}v_g\geq0
        \qforq v\in\Gamma(E).
    \end{equation*}
    Applying this inequality to the four sections $Jv_{a_\alpha}$, summing, and observing that
    \begin{equation*}
        \sum_{\alpha=1}^4|Jv_{a_\alpha}|^2
        =\sum_{\alpha=1}^4|d\phi(X_{a_\alpha})|^2
        =|d\phi|^2,
    \end{equation*}
    equation~\eqref{eq:Riviere-pointwise-identity} yields
    \begin{equation}\label{ineq:Riviere}
        \int_{\bbS^3}\left(|d\phi|^4-2|\phi^\ast h|^2\right)\,\mathrm{d}v_g\geq0.
    \end{equation}

    Let $\sigma_1\geq\sigma_2\geq0$ be the two possibly non-zero singular values of $d\phi$ at a given point. Then
    \begin{equation}\label{eq:Riviere-algebraic-inequality}
        |d\phi|^4-2|\phi^\ast h|^2
        =(\sigma_1^2+\sigma_2^2)^2
        -2(\sigma_1^4+\sigma_2^4)
        =-(\sigma_1^2-\sigma_2^2)^2\leq0.
    \end{equation}
    Combining~\eqref{ineq:Riviere} and~\eqref{eq:Riviere-algebraic-inequality}, we conclude that $\sigma_1=\sigma_2$ at every point. Hence $\phi$ is horizontally weakly conformal. Since it is also harmonic, it is a harmonic morphism. The classification of smooth harmonic morphisms $\bbS^3\to\bbS^2$ in~\cite[Theorem~6.7.7]{BairdWood2003} then gives
    \begin{equation*}
        \phi=P\circ\pi\circ f,
    \end{equation*}
    for an isometry $f\in\Aut(\bbS^3)$ and a holomorphic self-map $P$ of $\C P^1$.
\end{proof}
\subsection{The higher dimensional case}
In this subsection, let $\phi:\bbS^{2n+1}\to\C P^n$ be a harmonic map and set $E=\phi^{-1}(T\C P^n)$. We show that, if $\mathrm{Ind}(\phi)=2n+2$, then $\mathrm{Im}(d\phi_x)$ is a complex subspace of $T_{\phi(x)}\C P^n$ for every $x\in\bbS^{2n+1}$. The argument follows the same strategy as above, using the explicit curvature tensor of $\C P^n$.

\begin{lemma}\label{lemma:Jacobian vs J}
    Let $v\in\Gamma(E)$ and let $J$ denote the complex structure on $\C P^n$. Then
    \begin{equation}
    h(\cJ_\phi(Jv),Jv) = h(\cJ_\phi v,v) + Z(v),
    \end{equation}
    where
    \begin{equation}
        Z(v)\coloneqq \sum_{j=1}^{2n+1}\left(h(v,J\phi_j)^2-h(v,\phi_j)^2\right),
        \qquad \phi_j=d\phi(e_j),
    \end{equation}
    for any local orthonormal frame $\{e_j\}_{j=1}^{2n+1}$ on $\bbS^{2n+1}$.
\end{lemma}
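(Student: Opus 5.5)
The identity is pointwise, so I plan to compare the two pieces of $\cJ_\phi$ separately: the rough Laplacian $(\nabla^\phi)^\ast\nabla^\phi$ and the curvature term $-\sum_j R^h(\cdot,\phi_j)\phi_j$. Fix $x\in\bbS^{2n+1}$ and a local orthonormal frame $\{e_j\}$ near $x$. All quantities are tensorial in the frame, so the final expression does not depend on this choice.

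\emph{Laplacian part.} Since $\C P^n$ is Kähler, $\nabla^h J=0$. Hence the pulled-back structure $J$ on $E$ is parallel for $\nabla^\phi$, that is, $\nabla^\phi_X(Jv)=J\nabla^\phi_X v$. Iterating this gives
\[
(\nabla^\phi)^\ast\nabla^\phi(Jv)=J\,(\nabla^\phi)^\ast\nabla^\phi v .
\]
Because $J$ is $h$-orthogonal, we get
\[
h\bigl((\nabla^\phi)^\ast\nabla^\phi(Jv),Jv\bigr)=h\bigl((\nabla^\phi)^\ast\nabla^\phi v,v\bigr),
\]
so the rough Laplacian contributes nothing to the difference.

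\emph{Curvature part.} I will use the explicit curvature of a complex space form of constant holomorphic sectional curvature $c$, written with the paper's sign convention:
\[
R^h(X,Y)Z=\tfrac c4\bigl[h(Y,Z)X-h(X,Z)Y+h(JY,Z)JX-h(JX,Z)JY+2h(X,JY)JZ\bigr].
\]
Pairing with $X$ and setting $Z=Y=w$ gives
\[
h(R^h(v,w)w,v)=\tfrac c4\bigl[|v|^2|w|^2-h(v,w)^2+3h(v,Jw)^2\bigr].
\]
Replacing $v$ by $Jv$ and using $h(Jv,w)=-h(v,Jw)$ and $h(Jv,Jw)=h(v,w)$, the middle and last terms swap roles. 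Subtracting the two expressions, the $|v|^2|w|^2$ terms cancel and we obtain
\[
h(R^h(Jv,w)w,Jv)-h(R^h(v,w)w,v)=c\bigl(h(v,w)^2-h(v,Jw)^2\bigr).
\]
Taking $w=\phi_j$, summing over $j$, and recalling the minus sign in front of the curvature term of $\cJ_\phi$ yields
\[
h(\cJ_\phi(Jv),Jv)-h(\cJ_\phi v,v)=c\sum_j\bigl(h(v,J\phi_j)^2-h(v,\phi_j)^2\bigr).
\]

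\emph{Main obstacle: the constant $c$.} The only genuinely delicate point is normalisation, namely that the metric $h$ used in this section makes the prefactor exactly one, so that the right-hand side is $Z(v)$. For $h$ coming from $\bangle{X,Y}=-\frac12\tr(XY)$ with $\delta=1$, I would check $c$ directly from the symmetric-space formula
\[
R^h(\Tilde X,\Tilde Y)\Tilde Z_K=-[[X,Y],Z]
\]
used in the proof of Theorem~\ref{thm:Urakawa}. To do this, take $X\in\fq$ corresponding to a unit vector $v\in\C^n$ and $Y$ the element corresponding to $iv$, and compute $\bangle{[[X,Y],Y],X}$. If this computation gives holomorphic curvature $4$, which is the value expected when $\pi$ is a Riemannian submersion from the unit sphere, the statement must be read with $h$ rescaled to holomorphic curvature $1$. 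Otherwise $Z(v)$ acquires the factor $c$, and I would carry this constant through the later arguments. Apart from this bookkeeping, the computation is routine.
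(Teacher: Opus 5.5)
Your argument is correct and is essentially the paper's proof: $J$ commutes with the rough Laplacian because it is parallel and orthogonal, and subtracting the two quartic curvature expressions coming from the complex-space-form formula gives $Z(v)$. The paper simply takes that formula with prefactor $\tfrac14$, i.e.\ $c=1$, and your suspicion about the constant is justified: with the normalisation of Section~3 ($\delta=1$, $\bangle{X,Y}=-\tfrac12\tr(XY)$) one gets $c=4$, so the identity reads $4Z(v)$, which is harmless because harmonicity, the index and the later sign arguments are unchanged when $h$ is rescaled.
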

\begin{proof}
    Since $\C P^n$ is Kähler, $J$ is parallel and commutes with the rough Laplacian. Therefore,
    \begin{equation}\label{eq:J_phi composed with J}
        \cJ_\phi(Jv) = J(\cJ_\phi v) + \sum_{i=1}^{2n+1}JR^h(v,\phi_i)\phi_i - \sum_{i=1}^{2n+1}R^h(Jv,\phi_i)\phi_i,
    \end{equation}
    and pairing this with $Jv$ we obtain
    \begin{equation}\label{eq:J_phi paired}
        h(\cJ_\phi(Jv),Jv) = h(\cJ_\phi v,v) + \sum_{i=1}^{2n+1}h(R^h(v,\phi_i)\phi_i,v) - \sum_{i=1}^{2n+1}h(R^h(Jv,\phi_i)\phi_i,Jv).
    \end{equation}
    For $X,Y,Z\in T_q\C P^n$, the curvature tensor is
    \begin{equation}\label{eq:CPn curvature}
        R^h(X,Y)Z=\frac14\bigl(h(Y,Z)X-h(X,Z)Y+h(JY,Z)JX-h(JX,Z)JY+2h(X,JY)JZ\bigr).
    \end{equation}
    In particular,
    \begin{align*}
        h(R^h(X,Y)Y,X)&=\frac14\left(|X|^2|Y|^2-h(X,Y)^2+3h(X,JY)^2\right),\\
        h(R^h(JX,Y)Y,JX)&=\frac14\left(|X|^2|Y|^2-h(X,JY)^2+3h(X,Y)^2\right).
    \end{align*}
    Taking their difference in~\eqref{eq:J_phi paired} proves the claim.
\end{proof}
\begin{lemma}\label{lemm:ineq-with-pullbacks}
    Let $f:\R^n\to\R^{2m}\cong\C^m$ be a real linear map, let $h$ be the standard real inner product on $\C^m$, and let $\omega(X,Y)=h(X,JY)$ be its Kähler form, where $J = \begin{pmatrix}
        0&-\mathrm{I}\\ \mathrm{I}&0
    \end{pmatrix}$. Then
    \begin{equation}
        |f^\ast\omega|^2\leq |f^\ast h|^2,
    \end{equation}
    with equality if and only if $f\circ f^\ast:\R^{2m}\to\R^{2m}$ commutes with $J$, where $f^\ast$ denotes the adjoint of $f$.
\end{lemma}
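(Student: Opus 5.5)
Set $A\coloneqq f\circ f^\ast$, a symmetric positive semidefinite endomorphism of $\R^{2m}$. The plan is to express both sides as traces involving $A$ and $J$, and then reduce the inequality to the vanishing of a commutator norm. Here $f^\ast h$ is the bilinear form $(X,Y)\mapsto h(fX,fY)$ on $\R^n$, represented by $f^\ast f$. Likewise $f^\ast\omega$ is represented by $f^\ast J^{T}f$, or equivalently $-f^\ast J f$ up to sign conventions, since $J^T=-J$. Norms are Hilbert--Schmidt norms with respect to an orthonormal basis $\{e_i\}$ of $\R^n$. The first step is to record
\begin{equation*}
    |f^\ast h|^2=\sum_{i,j}h(fe_i,fe_j)^2=\tr\bigl((f^\ast f)^2\bigr)=\tr(A^2),
\end{equation*}
and
\begin{equation*}
    |f^\ast\omega|^2=\sum_{i,j}h(fe_i,Jfe_j)^2=\tr\bigl((f^\ast Jf)^T(f^\ast Jf)\bigr)=-\tr(f^\ast Jff^\ast Jf)=-\tr(JAJA),
\end{equation*}
both obtained by cyclicity of the trace.

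Next compute the commutator norm. Since $A$ is symmetric and $J$ is orthogonal and skew, the commutator $[A,J]$ is symmetric, and
\begin{equation*}
    |[A,J]|^2=\tr\bigl([A,J]^T[A,J]\bigr)=-\tr\bigl((AJ-JA)^2\bigr).
\end{equation*}
Expanding, with $J^2=-I$ and cyclicity,
\begin{equation*}
    \tr\bigl((AJ-JA)^2\bigr)=\tr(AJAJ)-\tr(AJJA)-\tr(JAAJ)+\tr(JAJA)=2\tr(JAJA)+2\tr(A^2).
\end{equation*}
Hence
\begin{equation*}
    |f^\ast h|^2-|f^\ast\omega|^2=\tr(A^2)+\tr(JAJA)=\tfrac12\bigl|[A,J]\bigr|^2\ge0.
\end{equation*}
This identity gives both the inequality and the equality case at once: equality holds if and only if $[A,J]=0$, that is, if and only if $f\circ f^\ast$ commutes with $J$.

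The only delicate point is the bookkeeping of signs and transposes, namely verifying $(f^\ast J f)^T=-f^\ast J f$ and that $[A,J]^T=[A,J]$ gives $|[A,J]|^2=-\tr([A,J]^2)$. I will check these explicitly. As a sanity check, for $m=1$ with singular values $\sigma_1,\sigma_2$ the identity should give $(\sigma_1^2-\sigma_2^2)^2/2\cdot$(normalisation), consistent with \eqref{eq:Riviere-algebraic-inequality}. The result is otherwise pure linear algebra and depends on no earlier result in the paper.
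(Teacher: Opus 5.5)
Your argument is correct. Up to the identity $|f^\ast h|^2-|f^\ast\omega|^2=\tr(P^2+JPJP)$, with $P=f\circ f^\ast$, it coincides with the paper's proof. (I write $P$ for your $A$ to match the paper's notation.)

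One sign needs fixing, exactly at the point you flagged as delicate. Because $[P,J]$ is symmetric, $|[P,J]|^2=\tr([P,J]^{\mathsf t}[P,J])=+\tr([P,J]^2)$; the minus sign would be right only for a skew matrix. With the sign as you wrote it, your expansion $\tr([P,J]^2)=2\tr(JPJP)+2\tr(P^2)$ would give $\tr(P^2)+\tr(JPJP)=-\tfrac12|[P,J]|^2$, which contradicts your final line. With the correct sign you get exactly $\tr(P^2)+\tr(JPJP)=\tfrac12|[P,J]|^2$, as claimed. Your $m=1$ check then holds exactly, $\tfrac12|[P,J]|^2=(\sigma_1^2-\sigma_2^2)^2$, with no extra normalisation.

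After that identity the two routes differ. The paper splits $P$ into $m\times m$ blocks $\begin{pmatrix}A&B\\B^{\mathsf t}&C\end{pmatrix}$; here $A,B,C$ are the paper's blocks, not your $A$. It computes $\tr(P^2+JPJP)=\tr((A-C)^2)+4\tr(S^2)$, with $S$ the symmetric part of $B$. It then has to recognise that the equality conditions $A=C$, $S=0$ amount to $PJ=JP$.

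Your commutator identity is the coordinate-free form of the same computation. Indeed $[P,J]=\begin{pmatrix}B+B^{\mathsf t}&C-A\\C-A&-(B+B^{\mathsf t})\end{pmatrix}$, so $\tfrac12|[P,J]|^2=|A-C|^2+4|S|^2$. Your version gives the inequality and the characterisation of equality in one step, with no block bookkeeping and no symmetric/skew splitting of $B$. The paper's version, in exchange, shows explicitly the complex-linear shape $P=\begin{pmatrix}A&B\\-B&A\end{pmatrix}$ forced at equality.
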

\begin{proof}
    Fix orthonormal bases $\{v_i\}$ and $\{w_j\}$ of $\R^n$ and $\C^m$, respectively, write $fv_i=f_i^jw_j$, and let $F=(f_i^j)$. Then
    \begin{equation}
        |f^\ast h|^2 = \sum_{i,j=1}^n h(fv_i,fv_j)^2 = \sum_{i,j} \left(\sum_{k=1}f_i^kf_j^k\right)^2 = |F^{\mathsf t}F|^2,
    \end{equation}
    For a matrix $A$, we have $|A|^2=\tr(A^{\mathsf t}A)$, and hence $|f^\ast h|^2=\tr((F^{\mathsf t}F)^2)$. Similarly, $|f^\ast\omega|^2=|F^{\mathsf t}JF|^2$. Since $F^{\mathsf t}JF$ is skew-symmetric, setting $P=FF^{\mathsf t}$ gives
    \begin{equation*}
    \begin{aligned}
        |f^\ast\omega|^2 = -\tr((F^{\mathsf t}JF)^2) = -\tr(F^{\mathsf t}JFF^{\mathsf t}JF) = -\tr(JPJP).
    \end{aligned}
    \end{equation*}
    Therefore
    \begin{equation}
        |f^\ast h|^2 - |f^\ast \omega|^2   = \tr(P^2+JPJP).
    \end{equation}
    Now, writing as block matrices, we have
    \begin{equation}
        P = \begin{pmatrix}
            A&B\\B^{\mathsf t}&C
        \end{pmatrix} \qandq J = \begin{pmatrix}
        0&-\mathrm{I}\\
        \mathrm{I}&0
    \end{pmatrix},
    \end{equation}
    where $A$ and $C$ are symmetric matrices. Hence
    \begin{equation}
        \tr(JPJP) = \tr\left[\begin{pmatrix}
            -C&B^{\mathsf t}\\B&-A
        \end{pmatrix}\begin{pmatrix}
            A&B\\B^{\mathsf t}&C
            \end{pmatrix}\right] = 2\tr(B^2) - 2\tr(AC),
    \end{equation}
    and also
    \begin{equation}
        \tr(P^2) = \tr(A^2)+\tr(C^2)+2\tr(B^{\mathsf t}B).
    \end{equation}
    Putting all the information together we get
    \begin{equation}\label{eq:p2-jpjp}
        \tr(P^2+JPJP) = \tr((A-C)^2) + 2\tr(B^{\mathsf t}B+B^2).
    \end{equation}
    Since $A-C$ is a symmetric matrix, the first term is clearly non-negative. For the second term, we write $B=S+K$, where $S$ is symmetric and $K$ is skew-symmetric, and then
    \begin{equation}
        B^{\mathsf t}B+B^2=(S^2-K^2+SK-KS)+(S^2+K^2+SK+KS)=2S^2+2SK.
    \end{equation}
    Since $S$ and $K$ are orthogonal, $\tr(SK)=0$, so $\tr(B^{\mathsf t}B+B^2)=2\tr(S^2)\geq0$. This proves the inequality.

    Equality holds if and only if $A=C$ and $S=0$, that is, if and only if $B$ is skew-symmetric. In that case
    \begin{equation}
        P = \begin{pmatrix}
            A &B\\-B& A
        \end{pmatrix},
    \end{equation}
    which is equivalent to $PJ=JP$, or, intrinsically, to $(f\circ f^\ast)J=J(f\circ f^\ast)$.
\end{proof}
\begin{corollary}
    Given any orthonormal frame $\{e_i\}$ at a point $p\in\bbS^{2n+1}$, we have
    \begin{equation}
        \sum_{k=1}^{2n+1} Z(d\phi(e_k)) \leq 0.
    \end{equation}
    Equality holds if and only if $d\phi_p\circ d\phi_p^\ast$ commutes with
    $J_{\phi(p)}$.
\end{corollary}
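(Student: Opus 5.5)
We will reduce the corollary to Lemma~\ref{lemm:ineq-with-pullbacks} applied to $f=d\phi_p:T_p\bbS^{2n+1}\to T_{\phi(p)}\C P^n$. Fix an orthonormal frame $\{e_k\}$ of $T_p\bbS^{2n+1}$ and write $\phi_k=d\phi(e_k)$. By definition of $Z$ in Lemma~\ref{lemma:Jacobian vs J}, using the same frame for the inner sum (the expression is frame-independent, being a trace),
\begin{equation*}
    \sum_{k=1}^{2n+1}Z(\phi_k)
    =\sum_{j,k=1}^{2n+1}\Bigl(h(\phi_k,J\phi_j)^2-h(\phi_k,\phi_j)^2\Bigr).
\end{equation*}

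Next we identify the two double sums with norms of pullback tensors. The second is $\sum_{j,k}(\phi^\ast h)(e_j,e_k)^2=|\phi^\ast h|^2$. For the first, with $\omega(X,Y)=h(X,JY)$ the Kähler form, we have $h(\phi_k,J\phi_j)=\omega(\phi_k,\phi_j)=(\phi^\ast\omega)(e_k,e_j)$, so the first sum is $|\phi^\ast\omega|^2$ (norm taken as the full sum of squared components, matching the convention used in the proof of Lemma~\ref{lemm:ineq-with-pullbacks}, where $|f^\ast\omega|^2=|F^{\mathsf t}JF|^2$). Hence
\begin{equation*}
    \sum_{k}Z(d\phi(e_k))=|f^\ast\omega|^2-|f^\ast h|^2 .
\end{equation*}

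Finally, choose an orthonormal basis of $T_{\phi(p)}\C P^n$ adapted to $J_{\phi(p)}$, i.e.\ of the form $\{w_1,\dots,w_n,Jw_1,\dots,Jw_n\}$, which identifies $(T_{\phi(p)}\C P^n,h,J)$ with $(\R^{2n},\text{standard},\begin{pmatrix}0&-\mathrm I\\ \mathrm I&0\end{pmatrix})$. Lemma~\ref{lemm:ineq-with-pullbacks} then gives $|f^\ast\omega|^2\le|f^\ast h|^2$, i.e.\ the sum is $\le0$, with equality iff $f\circ f^\ast=d\phi_p\circ d\phi_p^\ast$ commutes with $J_{\phi(p)}$. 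The only point needing care is the bookkeeping of norm conventions (full double sum versus sum over $j<k$), which must be consistent between $Z$ and the lemma; otherwise the argument is immediate.
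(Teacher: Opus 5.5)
Your proposal is correct and takes the same route as the paper. The paper gives no separate proof of this corollary, but in the proof of Theorem~\ref{thm:intro-low-index-rigidity} it uses exactly your identity $\sum_k Z(d\phi(e_k))=|\phi^\ast\omega|^2-|\phi^\ast h|^2$ (with full double-sum norms) and then invokes Lemma~\ref{lemm:ineq-with-pullbacks} for both the inequality and the equality case; your choice of a $J$-adapted orthonormal basis of $T_{\phi(p)}\C P^n$ makes explicit the identification with the standard $J$ that the paper leaves implicit.
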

\begin{proof}[Proof of Theorem~\ref{thm:intro-low-index-rigidity}]
For each $a\in\R^{2n+2}$, let $X_a\in\fX(\bbS^{2n+1})$ be the
conformal Killing vector field
\[
    X_a(x)=a-\bangle{a,x}x,
\]
and set $v_a\coloneqq d\phi(X_a)\in\Gamma(E)$. Since $\phi$ is harmonic,
\begin{equation}
    \cJ_\phi(v_a)=(1-2n)v_a,
\end{equation}
and Lemma~\ref{lemma:Jacobian vs J} gives $h(\cJ_\phi(Jv_a),Jv_a)=(1-2n)|v_a|^2+Z(v_a)$. Since $\dim\{v_a:a\in\R^{2n+2}\}=2n+2$ and $\mathrm{Ind}(\phi)=2n+2$, we have $\lambda_1(\cJ_\phi)=1-2n$. The Rayleigh quotient therefore yields
\begin{equation}\label{eq:Z-is-positive}
    0\leq\int_{\bbS^{2n+1}}\left[h(\cJ_\phi(Jv_a),Jv_a)-(1-2n)|v_a|^2\right]\,\mathrm{d}v_g
    =\int_{\bbS^{2n+1}}Z(v_a)\,\mathrm{d}v_g.
\end{equation}
For each $x\in\bbS^{2n+1}$, define a quadratic form on the fixed ambient
space $\R^{2n+2}$ by
\[
    Q_x(a)\coloneqq Z(v_a)(x).
\]
Equation~\eqref{eq:Z-is-positive} says that the averaged quadratic form
$Q=\int_{\bbS^{2n+1}}Q_x\,\mathrm{d}v_g$ is positive semidefinite. Hence
\begin{equation}\label{eq:integrated-trace-Q}
    0\leq\tr Q
    =\int_{\bbS^{2n+1}}\tr Q_x\,\mathrm{d}v_g,
\end{equation}
where the trace may first be taken with respect to any fixed orthonormal
basis of $\R^{2n+2}$.

We now compute $\tr Q_x$ pointwise using a basis adapted to $x$. Choose
an orthonormal basis of $T_x\bbS^{2n+1}$ and complete it with the unit
normal $x$, thus obtaining the orthonormal basis
\[
    \{e_1,\ldots,e_{2n+1},x\}
\]
of $\R^{2n+2}$. Since $X_{e_i}(x)=e_i$ and $X_x(x)=0$, the invariance of
the trace under a change of orthonormal basis gives
\begin{equation}
    \tr Q_x
    =\sum_{i=1}^{2n+1}Z(d\phi_x(e_i))
    =|\phi^\ast\omega|^2_x-|\phi^\ast h|^2_x\leq0.
\end{equation}
Combining this pointwise inequality with~\eqref{eq:integrated-trace-Q}
gives $\int_{\bbS^{2n+1}}\tr Q_x\,\mathrm{d}v_g=0$. Since
$x\mapsto\tr Q_x$ is continuous and non-positive, it follows that
$\tr Q_x=0$ for every $x$. Lemma~\ref{lemm:ineq-with-pullbacks} then implies
\begin{equation}\label{eq:PHWC-commutation}
    d\phi_x\circ d\phi_x^\ast\circ J_{\phi(x)}=J_{\phi(x)}\circ d\phi_x\circ d\phi_x^\ast
    \qforq x\in\bbS^{2n+1}.
\end{equation}
The next step is to prove that, after composing with an isometry of $\bbS^{2n+1}$, one has $d\phi(\xi)=0$, where $\xi$ is the Hopf vector field. Equations~\eqref{eq:J_phi composed with J} and~\eqref{eq:CPn curvature} give
 \begin{equation}\label{eq:J_phi and J again}
     \cJ_\phi(Jv)=J(\cJ_\phi v)-\sum_{i=1}^{2n+1}\left[h(Jv,\phi_i)J\phi_i-h(v,\phi_i)\phi_i\right].
 \end{equation}
Set $B(v,w)=h(Jv,w)Jw+h(v,w)w$ and note that $B(v,w)+B(v,Jw)=0$. Since $d\phi\circ d\phi^\ast$ commutes with $J$, there is an endomorphism $J_0$ of $T\bbS^{2n+1}$ satisfying $d\phi(J_0X)=Jd\phi(X)$ on $\mathrm{Im}(d\phi^\ast)$ and $J_0=0$ on $\ker(d\phi)$. If $\{e_1,\ldots,e_{2r}\}$ is an orthonormal basis of $\mathrm{Im}(d\phi^\ast)$ with $e_{r+i}=J_0e_i$, then
 \begin{equation}
    B(v,d\phi(e_i))+B(v,d\phi(e_{r+i}))=0,\qquad 1\leq i\leq r.
 \end{equation}
Thus~\eqref{eq:J_phi and J again} implies $J\circ\cJ_\phi=\cJ_\phi\circ J$. In particular, the eigenspace $\mathcal V=\{v_a:a\in\R^{2n+2}\}$ is $J$-invariant, so there is a complex structure $J_1$ on $\R^{2n+2}$ such that
 \begin{equation}\label{eq:v commutes with J}
     J v_a = v_{J_1a} \qforq a\in\R^{2n+2}.
 \end{equation}
Let $J$ also denote the standard complex structure of $\R^{2n+2}$, which induces the Sasakian structure on $\bbS^{2n+1}$ and satisfies $JX=-\nabla_X\xi$. Choose $M\in\O(2n+2)$ such that $J=M^\ast J_1M$. Since $X_{Ma}(Mx)=MX_a(x)$, we also have
 \begin{equation}
     M(X_{Ja}(x)) = J_1(Ma) - \bangle{J_1Ma,Mx}Mx = X_{J_1(Ma)}(Mx)
 \end{equation}
and, regarding $M$ as an isometry of $\bbS^{2n+1}$,
 \begin{equation}
     d(\phi\circ M)_x(X_{Ja})=d\phi_{Mx}(X_{J_1(Ma)}(Mx))=Jd\phi_{Mx}(X_{Ma}(Mx))=Jd(\phi\circ M)_x(X_a).
 \end{equation}
Replacing $\phi$ by $\phi\circ M$, equation~\eqref{eq:v commutes with J} becomes
\begin{equation}\label{eq:v commutes with J again}
    Jv_a = v_{Ja} \qforq a\in\R^{2n+2}.
\end{equation}
Since $\xi(x)=Jx=X_{Jx}(x)$, equation~\eqref{eq:v commutes with J again} gives
\begin{equation}
    0=v_{-x}(x)=Jv_{Jx}(x)=Jd\phi_x(\xi(x)),
\end{equation}
so $d\phi(\xi)=0$. Hence $\phi$ is constant along the fibers of the Hopf fibration and factors as $\phi=P\circ\pi$ for a smooth map $P:\C P^n\to\C P^n$.

It remains to prove that $P$ is holomorphic. If $a\in T_x\bbS^{2n+1}$ is horizontal, then $Ja\in T_x\bbS^{2n+1}$ and
\begin{equation}
    Jd\phi_x(a) = Jd\phi_x(X_a(x)) = d\phi_x(X_{Ja}(x)) = d\phi_x(Ja),
\end{equation}
so $d\phi_x$ intertwines the complex structures on the horizontal distribution and on $\C P^n$. Thus, for $z=\pi(x)$ and $Y=d\pi_x(X)\in T_z\C P^n$,
\begin{equation}
    dP_z(JY)=d\phi_x(JX)=Jd\phi_x(X)=JdP_z(Y),
\end{equation}
which proves that $P$ is holomorphic.
\end{proof}

\section*{Acknowledgements}
The author extends special thanks to Professor Eric Loubeau.

This study was financed in part by the Coordenação de Aperfeiçoamento de
Pessoal de Nível Superior -- Brasil (CAPES).

The author also acknowledges support from the CAPES/MATH-AMSUD programme
under grant no.~88887.193117/2025-00.

The author thanks the Laboratoire de Mathématiques de Bretagne Atlantique
(LMBA, UMR~6205), Université de Bretagne Occidentale (UBO), for its
hospitality while part of this research was carried out.

\section*{Statement on the use of AI-assisted tools}
Prism was used for editorial purposes, including \LaTeX{} editing and
language revision.  OpenAI's ChatGPT, using the GPT-5.6 Sol model, was
used as an assistant for the representation-theoretic and arithmetic
computations in Section~3.  All mathematical arguments and computations
were independently verified by the author, who takes full responsibility
for the content of this article.

\bibliographystyle{alpha}
\bibliography{bib}

@article{EellsSampson1964,
  author  = {Eells, Jr., James and Sampson, J. H.},
  title   = {Harmonic mappings of {Riemannian} manifolds},
  journal = {Amer. J. Math.},
  volume  = {86},
  number  = {1},
  pages   = {109--160},
  year    = {1964},
  doi     = {10.2307/2373037}
}

@book{BairdWood2003,
  author    = {Baird, Paul and Wood, John C.},
  title     = {Harmonic Morphisms between {Riemannian} Manifolds},
  series    = {London Mathematical Society Monographs. New Series},
  volume    = {29},
  publisher = {Oxford University Press},
  address   = {Oxford},
  year      = {2003},
  doi       = {10.1093/acprof:oso/9780198503620.001.0001}
}

@article{ElSoufi1995,
  author  = {El Soufi, Ahmad},
  title   = {Indice de {Morse} des applications harmoniques de la sph{\`e}re},
  journal = {Compositio Math.},
  volume  = {95},
  number  = {3},
  pages   = {343--362},
  year    = {1995}
}

@article{LoubeauOniciuc2007,
  author  = {Loubeau, Eric and Oniciuc, Cezar},
  title   = {On the biharmonic and harmonic indices of the {Hopf} map},
  journal = {Trans. Amer. Math. Soc.},
  volume  = {359},
  number  = {11},
  pages   = {5239--5256},
  year    = {2007},
  doi     = {10.1090/S0002-9947-07-03934-7}
}

@misc{GutweinLanglais2026,
  author        = {Gutwein, Dominik and Langlais, Thibault},
  title         = {Deformations of harmonic maps with conical singularities},
  year          = {2026},
  eprint        = {2609.20588},
  archivePrefix = {arXiv},
  primaryClass  = {math.DG},
  note          = {arXiv:2609.20588 [math.DG]},
  doi           = {10.48550/arXiv.2609.20588}
}

@misc{Surkov2026,
  author        = {Surkov, Egor},
  title         = {Harmonic maps from spheres with lowest possible index and their properties},
  year          = {2026},
  eprint        = {2609.35589},
  archivePrefix = {arXiv},
  primaryClass  = {math.DG},
  note          = {arXiv:2609.35589 [math.DG]},
  doi           = {10.48550/arXiv.2609.35589}
}

@misc{ChenWangZhang2026,
  author        = {Chen, Qun and Wang, Guofang and Zhang, Mingwei},
  title         = {Harmonic maps from {$\mathbb{S}^{2n+1}$} into {$\mathbb{CP}^n$} with least {Morse} index},
  year          = {2026},
  eprint        = {2609.37945},
  archivePrefix = {arXiv},
  primaryClass  = {math.DG},
  note          = {arXiv:2609.37945 [math.DG]},
  doi           = {10.48550/arXiv.2609.37945}
}

@article{Urakawa1987-so,
  author  = {Urakawa, Hajime},
  title   = {Stability of harmonic maps and eigenvalues of the {Laplacian}},
  journal = {Trans. Amer. Math. Soc.},
  volume  = {301},
  number  = {2},
  pages   = {557--589},
  year    = {1987},
  doi     = {10.1090/S0002-9947-1987-0882704-8}
}

@article{Riviere2023-co,
  author  = {Rivi{\`e}re, Tristan},
  title   = {Harmonic maps from {$S^3$} into {$S^2$} with low {Morse} index},
  journal = {J. Differential Geom.},
  volume  = {125},
  number  = {1},
  pages   = {173--185},
  year    = {2023},
  doi     = {10.4310/jdg/1695236594}
}

@article{ElSoufi93,
  author   = {El Soufi, Ahmad},
  title    = {Applications harmoniques, immersions minimales et transformations conformes de la sph{\`e}re},
  journal  = {Compositio Math.},
  volume   = {85},
  number   = {3},
  pages    = {281--298},
  year     = {1993},
  mrnumber = {1214448}
}

@book{Brocker2010-sv,
  author    = {Br{\"o}cker, Theodor and tom Dieck, Tammo},
  title     = {Representations of Compact {Lie} Groups},
  series    = {Graduate Texts in Mathematics},
  volume    = {98},
  publisher = {Springer-Verlag},
  address   = {New York},
  year      = {1985},
  doi       = {10.1007/978-3-662-12918-0}
}

@article{Ikeda78,
  author  = {Ikeda, Akira and Taniguchi, Yoshiharu},
  title   = {Spectra and eigenforms of the {Laplacian} on {$S^n$} and {$P^n(\mathbb{C})$}},
  journal = {Osaka J. Math.},
  volume  = {15},
  number  = {3},
  pages   = {515--546},
  year    = {1978},
  doi     = {10.18910/6956}
}

\end{document}